\newtheorem{theorem}{Theorem}
\newtheorem{properties}{Property}
\newtheorem{proposition}{Proposition}
\newtheorem{conjecture}{Conjecture}
\newtheorem{corollary}{Corollary}
\newtheorem{claim}{Claim}
\newtheorem{remark}{Remark}
\DeclareMathOperator{\Hom}{Hom}
\DeclareMathOperator{\Span}{span}
\DeclareMathOperator{\Dim}{dim}
\def\bo#1{\textrm{bo}_c(#1)}
\def\bomin#1{\textrm{bo}_{\textrm{min}}(#1)}
\def\bomax#1{\textrm{bo}_{\textrm{max}}(#1)}
\def\bw#1{\textrm{bw}(#1)}
\def\B#1#2{\mathcal{B}_{#1}^{#2}}
\def\cm{\varphi}
\def\mm{m}
\def\mmc{\overline{m}}
\def\aa{\boldsymbol{\alpha}}
\def\be{\begin{equation}}
\def\ee{\end{equation}}
\begin{document}
	
	
\author{Matthew Terje Aadne$^\dagger$ and Lode Wylleman$^{\dagger,\diamond}$}
\title{Progress on the Kundt conjecture}
\affil{${}^\dagger$Faculty of Science and Technology,\\
University of Stavanger,\\
4036 Stavanger, Norway\\
${}^\diamond$ Faculty of Engineering Sciences and Architecture,\\
Ghent University,\\
9052 Ghent, Belgium
}
\affil{matthew.t.aadne@uis.no,lode.wylleman@ugent.be}
\maketitle
\begin{abstract}
    The Kundt conjecture states that a Lorentzian manifold of arbitrary dimension which is not characterized by its scalar polynomial curvature invariants (SPIs) allows for a non-twisting, non-shearing and non-expanding (in short, Kundt) null congruence of geodesics. The conjecture has been proven for dimensions 3 and 4. A necessary condition for a spacetime not to be characterized by SPIs is that all covariant derivatives of the Riemann tensor are of aligned type II or more special in the null alignment classification. In arbitrary dimensions, we prove that this property indeed requires the presence of a Kundt null congruence when a certain genericity condition holds, or when the trac-free Ricci or Weyl tensor is of genuine type III or N, thus confirming the validity of the Kundt conjecture in these cases. We also strenghten the results for dimensions 3 and 4 by removing regularity assumptions and showing that only the third covariant derivative is needed to obtain the results. A key tool in our proofs is a new bilinear map acting on tensors of related boost orders relative to a null direction. 
\end{abstract}
\section{Introduction}

We consider Lorentzian manifolds (or spacetimes) of arbitrary dimension $n$. To describe the topic of this paper let us first recall some concepts and nomenclature. 
A tensor (or set of tensors) is said to be of {\em aligned type II or more special}~\cite{Milsonetal05,Hervik-align} 
or {\em algebraically special}~\cite{OrtPraPrareview} 
w.r.t.~a null congruence $c$ if it has (or each member has) non-positive boost order along $c$ (see \cite{OrtPraPrareview} or appendix \ref{app:nullalignment} for a review on boost order and null alignment types of general tensors). The Riemann tensor and its covariant derivatives will be jointly called {\em curvature tensors}. 
A null congruence $c$ on a spacetime will be dubbed {\em Kundt} if it is non-twisting (thus geodetic), non-shearing and non-expanding; if $k$ is any vector field that generates $c$ the Kundt condition may be succinctly written as 
$$
k_{[a}\nabla_{b]}k_{[c}k_{d]}=0.
$$ 
A spacetime is called 
{\em degenerate Kundt}~\cite{ColeyetalKundt} if it allows for a Kundt null congruence w.r.t.~which all curvature tensors are of aligned type II or more special. 

The following conjecture, stated here in the terminology of this article, was put forward in \cite{SCPI}:
\begin{conjecture}\label{Conj:Kundt}
If all curvature tensors of a spacetime $(M,g)$ are of aligned type II or more special 
w.r.t.~some null congruence then the spacetime is degenerate Kundt, i.e., for each point $p\in M$ the curvature tensors are of aligned type II or more special w.r.t.~a {\em Kundt} null congruence $c^{\prime}$ defined in a neighborhood of $p$. 
\end{conjecture}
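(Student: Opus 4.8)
The plan is to fix a null field $k$ generating the given aligned congruence $c$ and to convert the infinite family of algebraic-speciality conditions $\bo{\nabla^{(m)}R}\le 0$ ($m\ge 0$) into constraints strong enough to force the optical data of some nearby congruence to vanish. Writing $\nabla_a k_b$ in a null frame $\{k,\ell,m_i\}$, its screen--screen block $L_{ij}=m_i{}^a m_j{}^b\nabla_a k_b$ carries the expansion (trace), shear (symmetric trace-free part) and twist (antisymmetric part), and $k$ generates a Kundt congruence precisely when it is geodesic and $L_{ij}=0$. I would first dispose of geodesy, which I expect to follow from the lowest-order conditions $\bo{R}\le 0$ and $\bo{\nabla R}\le 0$ via the Bianchi identities, and then concentrate entirely on showing $L_{ij}=0$ --- allowing, if necessary, a re-choice of $k$ among the aligned directions to produce the neighbouring congruence $c'$ permitted by the statement.

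The engine is the interplay between covariant differentiation and boost weight. In a boost-weight-graded (generalized GHP) formalism, the frame derivatives raise boost weight only through multiplication by the boost-weight $+1$ rotation coefficients, which are precisely the entries of $L_{ij}$. Hence the leading (highest-boost-weight) part of $\nabla T$ is obtained from the leading part of $T$ by a fixed bilinear operation: I would formalise this as a map $B$ sending a pair consisting of the optical matrix $L$ and the boost-order component of a curvature tensor to the induced top component of its derivative, with $\bw{B(L,T_{\mathrm{lead}})}=\bo{T}+1$. This is the bilinear map advertised in the abstract. The immediate consequence is a dichotomy: for any curvature tensor $T=\nabla^{(m)}R$ with $\bo{T}=0$ one has
\[ \bo{\nabla T}\ge 1 \quad\text{whenever } B(L,T_{\mathrm{lead}})\neq 0, \]
which violates aligned type II; therefore either $L$ lies in the kernel of $B(\,\cdot\,,T_{\mathrm{lead}})$ for every such $T$, or some leading part already sits at boost weight $\le -1$.

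This reduces the conjecture to a purely algebraic question about the family of boost-order components $\{T_{\mathrm{lead}}:T=\nabla^{(m)}R\}$. Under a genericity condition --- that at least one curvature tensor is of genuine type II (leading part at boost weight $0$) whose leading part is annihilated by no nonzero $L$ under $B$ --- the common kernel is trivial, forcing $L_{ij}=0$ and hence Kundt. When instead the trace-free Ricci or Weyl tensor is of genuine type III or N, all of its leading parts sit at boost weight $\le -1$ and this first-order mechanism is silent; there I would exploit the rigid normal forms of these types, feeding the nonzero subleading components back through $B$ and the Bianchi system to show that no nonzero $L_{ij}$ is compatible with $\bo{\nabla^{(m)}R}\le 0$ for all $m$.

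The main obstacle, and the reason the conjecture resists a fully general proof, lives in the complementary regime: a genuine type II curvature whose every leading component is annihilated by some common nonzero $L$ under $B$. Such optical matrices are not excluded by the first-order identities alone --- expanding Robinson--Trautman-like configurations show that nontrivial kernels of $B$ do occur --- so ruling them out appears to require integrating the full Bianchi system rather than merely reading off leading parts, or an independent invariant-theoretic input. I expect that closing this gap, rather than setting up the graded formalism or constructing the bilinear map, is where the genuine difficulty lies.
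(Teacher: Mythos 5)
The statement you are addressing is labelled a conjecture, and the paper does not prove it in general; it establishes only subcases (a genericity condition on the Ricci tensor, trace-free Ricci or Weyl of genuine type III or N, and dimensions 3 and 4). Read as a strategy, your sketch coincides with the paper's. Your bilinear operation $B$ is, up to reformulation, the paper's map $\langle T|k|Q\rangle$ together with the factorization theorem $(\nabla_a T_{c_1\cdots c_r})Q^{c_1\cdots c_r}=\nabla_a k^b\langle T|k|Q\rangle_b$; your dichotomy (``either $L$ annihilates every leading part or type II fails'') is exactly the paper's claim that $\nabla_a k_b X^a z^b=0$ for all $z$ in the span $K_N^c(p)$ of the images of these maps, with Kundt following when that span is all of $c^\perp/c$; your genericity hypothesis is the paper's requirement that the timelike generalized eigenspace of $S$ be two-dimensional, so that $\lambda\delta_{ij}-S_{ij}$ is invertible; and your type III/N programme matches the paper's propositions, which iterate the factorization theorem from $\nabla\nabla T$ down to $T$ and use commutation of covariant derivatives (the generalized Ricci identity rather than the Bianchi identity) before killing $\rho_{ij}$ by positivity of contractions. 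You also correctly locate the residual difficulty in the common-kernel regime; the paper closes that gap only in dimensions 3 and 4, by a case exhaustion that must in particular confront the conformally flat, Ricci type D branch, where first-order identities genuinely do not force Kundt and the spacetime is instead shown to be a warped product.

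Two cautions on your sketch. First, geodesy is not obtained from the Bianchi identities in the paper; it comes from the same factorization mechanism (e.g.\ $0=\nabla_0 S_{ij}=\kappa_i v_j+\kappa_j v_i$ in the type III case), and you need it before your grading claim $\bw{B(L,T_{\mathrm{lead}})}=\bo{T}+1$ is valid, since for a non-geodesic congruence covariant differentiation can raise boost order by two through the $\kappa_i$ terms. Second, because the algebraic type of the curvature may jump from point to point, the pointwise algebraic arguments must be propagated to neighbourhoods via lower semi-continuity of the boost order and of $d_N^c$; the paper's dimension 3 and 4 theorems are organized entirely around this issue, and without it your argument yields the Kundt property only on the (possibly proper) subset where the relevant genericity holds.
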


A main motivation to prove this conjecture lies in 
the invariant characterization of spacetime metrics and the equivalence problem~\cite{Olverbook,Stephanibook}. Given an open subset $U$ of a manifold, two metrics with components $g_{\alpha\beta}$ and $g'_{\mu\nu}$ in respective coordinate systems $\{x^\alpha\}$ and $\{x'^\mu\}$ on $U$ are locally equivalent if and only if a coordinate transformation, in shorthand $x' = x'(x)$, 
exists such that $g_{\alpha\beta}(x)=\tilde{g}_{\mu\nu}(x'(x))\frac{\partial x'^\mu}{\partial x^\alpha}(x)\frac{\partial x'^\nu}{\partial x^\beta}(x)$. To verify in a direct way whether such a coordinate transformation exists or not may be extremely difficult. A more feasible method is to look for a set of {\em scalar invariants} associated to any metric, having the property that if $f_A(x)$ and $f'_A(x')$ are corresponding invariants in the set respectively calculated for $g(x)$ and $g'(x')$, then these metrics are equivalent precisely when $f_A(x)=f'_A(x'(x))$ for all $A$. Such a set thus gives a complete local characterization of any spacetime. It is well known that a set of {\em Cartan invariants} with this property can be constructed from a finite number of curvature tensors~\cite{Stephanibook}. 
This construction, however, is sometimes tedious (and for dimensions larger than four even not known in general). It is natural to ask whether the simplest thinkable invariants, namely {\em scalar polynomial invariants} (SPIs) constructed from the curvature tensors 
by index shuffling, taking tensor products 
and full scalar contractions, completely characterize all spacetime metrics as well. However, the answer is negative. Elementary counterexamples are provided by metrics for which all SPIs vanish. Unlike their Riemannian counterparts, such Lorentzian metrics are generally not locally equivalent to the Minkowski metric; the corresponding spacetimes have been dubbed {\em VSI spacetimes} and include the so-called {\em pp-waves}~\cite{Brinkmann25,Ortaggio18}. The VSI theorem~\cite{Pravdaetal04,Coleyetal04,Hervik-align,OrtPraPrareview} now states that 
a spacetime is VSI if, and only if, 
it allows for a Kundt null congruence w.r.t.~which 
the Riemann tensor (and then any curvature tensor) is type III or more special 
in the null alignment classification. On itself, the assumption that all curvature tensors are of aligned type III or more special trivially implies the VSI property, such that the `only if' part of the VSI theorem then confirms conjecture \ref{Conj:Kundt} for this case. 
VSI spacetimes are special instances of degenerate Kundt spacetimes. More generally, it was shown in \cite{SCPI} that all of these provide examples of spacetimes that are  
not locally characterized by their SPIs. 
The {\em Kundt conjecture} now states that they are the only examples: a spacetime not locally characterized by SPIs must be degenerate Kundt. In \cite{CharBySpi} this conjecture was proven in dimension 4, while arguments (but not clean proofs) were given in \cite{SCPI} supporting the conjecture in arbitrary dimension.
From Hervik's alignment theorem~\cite{Hervik-align} it follows that a spacetime not fully characterized by its SPIs must have curvature tensors of aligned type II or more special (see also \cite{HerOrtWyl13} for a streamlined proof and further explanations). Hence, the validity of conjecture \ref{Conj:Kundt} would imply that of the Kundt conjecture. 
This would mean that SPIs {\em do} locally characterize spacetimes completely, {\em except} the degenerate Kundt spacetimes. Since SPIs are much more easy to calculate than Cartan invariants, this would considerably simplify the local characterization of spacetimes. 

The present paper confirms conjecture \ref{Conj:Kundt}, and thus proves the Kundt conjecture, in important subcases. The structure is as follows. 
In section \ref{sec:bilinearmap} we introduce a bilinear map acting on tensors of related boost orders relative to a null direction. This map underlies a particular factorization result for covariant derivatives of tensors (theorem \ref{FactorizationTheorem}) and forms a key tool in our proofs. It is used to show the validity of conjecture \ref{Conj:Kundt} under a genericity condition on the Ricci tensor, specified in section \ref{sec:Kundtgeneric}. 
As in dimension 4~\cite{CharBySpi} one can split up the verification of the conjecture for arbitrary dimension according to the (genuine) null alignment types of the trace-free Ricci and Weyl tensors; in section \ref{sec:IIIorN} we provide proofs for the cases where only one of these tensors is assumed to be of type III or N, thus generalizing the above VSI result in this respect. In sections \ref{sec:Kundt3} and \ref{sec:Kundt4} we turn our attention to dimensions 3 and 4, and strenghten the proofs given in the literature by removing usual regularity assumptions on the null alignment type, and showing that only the third covariant derivative of the Riemann tensor is needed to obtain the results (opposed to the fourth derivative exployed for some subcases in \cite{CharBySpi}). 
Appendix \ref{app:nullalignment} provides a succinct review of boost order and null alignment type theory of tensors.\\ 

{\em General notation.} For tensors we use index-free or abstract index notation, depending on the context. As usual, abstract indices are lowered and raised by the metric $g_{ab}$ resp.~the inverse metric $g^{ab}$ ($g_{ab}g^{bc}=\delta_a^c$) in use, leading to geometrically equivalent tensors denoted by the same symbol. Einstein's summation convention is applied on both abstract and frame indices.

\section{Algebraic type based bilinear maps}\label{sec:bilinearmap}
In this section we define a bilinear map between spaces of rank $r$ tensors of specified boost orders. We show that this map, when applied to a Lorentzian manifold with a given null-congruence dictates the geometry of the null congruence.

Let $(V,g)$ be a Lorentzian space, consisting of a real vector space $V$ with inner product $g$, and let ${\cal T}_r$ denote the space of covariant rank $r$ tensors ($r\geq 1$) and $\Lambda^2$ the space of 2-forms over $V$
For all $X,Y\in{\cal T}_r$ we define $\cm(X,Y)\in \Lambda^2$ by
\begin{equation*}
\cm(X,Y)_{ab}=\sum_{i=1}^r \tensor{X}{_{c_{1}}_{\cdots} _{c_{i-1}}_b_{\;c_{i+1}}_{\cdots}_{c_{r}}}\tensor{Y}{^{c_1}^{\cdots}^{c_{i-1}}_{\;a}^{c_{i+1}}^{\cdots}^{c_{r}}}-\tensor{X}{_{c_{1}}_{\cdots} _{c_{i-1}}_a_{\;c_{i+1}}_{\cdots}_{c_{r}}}\tensor{Y}{^{c_1}^{\cdots}^{c_{i-1}}_{\;b}^{c_{i+1}}^{\cdots}^{c_{r}}}.
\end{equation*}

Consider now a triple $(V,g,c)$ where $c$ is a null line in $(V,g)$, and let $\B{r}{s}$ be the subspace of ${\cal T}_r$ consisting of the  rank $r$ tensors of boost order $\leq s$ along $c$.  Identifying vectors with their gometrically equivalent covectors we have $\B{1}{0}=c^\perp$ and $\B{1}{-1}=c$. Note that if $\bo{T}=s$ and $\bo{Q}=s'$ then $\cm(T,Q)\in \B{2}{s+s'}$; applying this to $s'=-s-1$ gives $\cm(T,Q)\in \B{2}{-1}$, implying $\cm(T,Q)^{ab}v_{b}\in c^\perp$ for any $v\in V$ and $\cm(T,Q)^{ab}Y_b\in c$ for any $Y\in c^\perp$, while if $s'<-s-1$ we have $\cm(T,Q)\in\B{2}{-2} \cap\Lambda^2$ and thus $\cm(T,Q)=0$. 
Fixing a non-zero element $k\in c$, let $l$ be any null vector satisfying $g(k,l)=1$ and define, for each $r\geq 1$ and $s$ with $-r\leq s\leq r-1$, the bilinear map
\begin{equation}\label{def-bilinear-a}
\langle\cdot\vert k \vert\cdot\rangle:\mathcal{B}^{s}_{r}\times \mathcal{B}^{-s-1}_{r} \rightarrow c^{\perp}/c
\end{equation}
by
\begin{equation}\label{def-bilinear-b}
\langle T\vert k\vert Q\rangle^a=\pi(\cm(T,Q)^{ab}l_b),
\end{equation}
for all $T\in \B{r}{s}$ and $Q\in \B{r}{-s-1}$, 
where $\pi:c^{\perp}\rightarrow c^{\perp}/c$ is the quotient map. (Note that, with a slight abuse of notation, we also employ $a,b,\cdots$ as abstract indices for tensors over $c^\perp/c$.) This bilinear map is independent of the choice of $l$: if $\hat{l}$ is another null-vector satisfying $g(k,\hat{l})=1$ there exists $Y\in c^{\perp}$ such that $\hat{l}=l+Y$, and since $\cm(T,Q)^{ab}Y_b\in c$ we have $\pi(\cm(T,Q)^{ab}l_b)=\pi(\cm(T,Q)^{ab}\hat{l}_b)$. 
In fact, we could also drop the dependence on $k\in c$ by considering the class of maps $\langle \cdot\vert k \vert \cdot \rangle$ in the projective space
\begin{equation}
\mathbb{P}(\Hom(\mathcal{B}_{r}^{s}\otimes \mathcal{B}_{r}^{-s-1},c^{\perp}/c)).
\end{equation}

For given $T\in {\cal T}_r$ with $\bo{T}=s$ the partial map $\langle T|k|\cdot\rangle:\B{r}{-s-1}\rightarrow c^\perp/c$ will be frequently applied in this paper.  
By the above we have
\begin{properties}\label{prop-zero} If $\bo{T}=s$ and $\bo{Q}<-s-1$ then $\langle T|k|Q\rangle=0$. 
\end{properties}
\noindent We will use suitable frame representations of $\langle T|k|\cdot\rangle$. 
Let $\{e_\alpha\}\equiv\{e_0,e_1,e_j\}=\{k,l,m_j\}$ 
be a (real) null frame of $(V,g_{ab})$ completing $k$, where $g(k,l)=g(m_j,m_j)=1$ are the only non-zero inner products among the basis vectors.
Here and below (indexed) Greek frame labels run from 0 to $n-1$, while the spatial frame labels $j,\,k,\,\ldots$ run from $2$ to $n-1$. For convenience we will use the following shorthand notation for multi-indices:
$$
\aa\equiv \alpha_1\cdots\alpha_r,\quad \aa_i\gamma\equiv \alpha_1\cdots\alpha_{i-1}\gamma\alpha_{i+1}\cdots \alpha_r,\quad \aa_i{}^\gamma\equiv \alpha_1\cdots\alpha_{i-1}{}^\gamma\alpha_{i+1}\cdots \alpha_r.
$$ 
The induced bases of $\B{r}{-s-1}$ and $c^\perp/c$ are, respectively, $\{e_{\alpha_1}\cdots e_{\alpha_r}|\bw{\aa}\geq s+1\}$ and $\{\pi(m_j)\}$.
Relative to these bases we have $\langle T|k|e_{\alpha_1}\cdots e_{\alpha_r}\rangle=0$ if $\bw{\aa}> s+1$ by property \ref{prop-zero}, while for $\bw{\aa}=s+1$ we obtain
\begin{equation}\label{map-expansion}
Q=e_{\alpha_1}\cdots e_{\alpha_r}:\;\;\langle T|k|Q\rangle=\sum_{i=1}^r\left(\delta^j_{\alpha_i}T_{\aa_i 1}-\delta^0_{\alpha_i}T_{\aa_i}{}^j\right)\pi(m_j).
\end{equation}

Now suppose that $(M,g,c)$ is a Lorentzian manifold of dimension $n\geq 3$ with a null congruence (or, equivalently, a null line distribution) $c$ and let 
$\B{r}{s}(M)$ the module of sections of the vector bundle of covariant rank $r$ tensors of boost order $\leq s$ along $c$. Given a non-vanishing section $k$ of $c$ the bilinear map \eqref{def-bilinear-a}-\eqref{def-bilinear-b} applied to each point gives a map
\begin{equation}
    \langle \cdot\vert k\vert\cdot\rangle: \mathcal{B}^{s}_r(M)\times \mathcal{B}^{-s-1}_r(M)\rightarrow \Omega^{0}(c^{\perp}/c),
\end{equation}
where $\Omega^{0}(c^{\perp}/c)$ denotes the space of sections of $c^{\perp}/c.$ From the following theorem, we see that the geometry of $c$ can be partially determined by this bilinear map and the covariant derivative of the tensors involved. 
\begin{theorem}\label{FactorizationTheorem}
If 
$\bo{T}=s$ on $M$ and $Q\in \mathcal{B}_{r}^{-s-1}(M)$, then
\begin{equation}\label{key}
    (\nabla_{a}T_{c_{1}\cdots c_{r}})Q^{c_{1}\dots c_{r}}=\nabla_{a}k^{b}\,\langle T\vert k \vert Q\rangle_{b}. 
\end{equation}
Furthermore, if 
$\bo{Q}<-s-1$ on $M$ we have $(\nabla_{a}T_{c_{1}\cdots c_{r}})Q^{c_{1}\dots c_{r}}=0$.
\end{theorem}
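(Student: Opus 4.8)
The plan is to fix the null frame $\{e_\alpha\}=\{k,l,m_j\}$ completing $k$ introduced above and to reduce \eqref{key} to a frame identity. Both sides are pointwise $C^\infty(M)$-linear in $Q$ (the left side because $\nabla_a$ differentiates $T$ only, the right side because $\langle T|k|\cdot\rangle$ is algebraic in $Q$), so it suffices to verify \eqref{key} on the frame fields $Q=e_{\alpha_1}\cdots e_{\alpha_r}$ spanning $\B{r}{-s-1}(M)$, i.e.\ those with $\bw{\aa}\geq s+1$, and then invoke linearity. For such a field $Q^{c_1\cdots c_r}=e_{\alpha_1}{}^{c_1}\cdots e_{\alpha_r}{}^{c_r}$, so the left-hand side of \eqref{key} is just the frame component $\nabla_{e_\beta}T_{\aa}$ of $\nabla T$; note that the undifferentiated contraction $T_{c_1\cdots c_r}Q^{c_1\cdots c_r}$ vanishes identically by boost-weight incompatibility, so all the content sits in the way $\nabla$ raises the boost order.

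The key simplification is that $\bw{\aa}\geq s+1>s=\bo{T}$ forces $T_{\aa}\equiv 0$ as a function on $M$. Hence in the frame expansion $\nabla_{e_\beta}T_{\aa}=e_\beta(T_\aa)-\sum_{i}\Gamma^\delta{}_{\alpha_i\beta}T_{\aa_i\delta}$, with $\Gamma^\delta{}_{\alpha\beta}$ the rotation coefficients of $\{e_\alpha\}$, the ordinary-derivative term $e_\beta(T_\aa)=e_\beta(0)$ drops out, leaving only rotation-coefficient terms. A boost-weight count then decides which survive: $T_{\aa_i\delta}$ can be non-zero only if its weight $\bw{\aa}-\varepsilon_{\alpha_i}+\varepsilon_\delta\leq s$, where $\varepsilon_0=1,\ \varepsilon_1=-1,\ \varepsilon_j=0$, so the single-index transition $\alpha_i\to\delta$ must strictly lower the weight. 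Enumerating the cases shows that when $\bw{\aa}>s+1$ the weight would have to drop by at least two, leaving only $0\to 1$, which carries $\Gamma^1{}_{0\beta}$; and when $\bw{\aa}=s+1$ only the transitions $\alpha_i=0\to\delta=j$ and $\alpha_i=j'\to\delta=1$ remain. Since $\Gamma^1{}_{0\beta}=0$ (see below), both sides of \eqref{key} already vanish unless $\bw{\aa}=s+1$. This also settles the ``furthermore'' claim: for $\bo{Q}<-s-1$ only basis fields with $\bw{\aa}\geq s+2$ occur, all contributing zero---equivalently, the right-hand side vanishes by property \ref{prop-zero}.

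It remains to match the two surviving families at $\bw{\aa}=s+1$ against \eqref{map-expansion}. The crux is metricity: since the frame inner products are constant, the lowered rotation coefficients $\Gamma_{\gamma\delta\beta}$ are antisymmetric in their first two indices, which gives both $\Gamma^1{}_{0\beta}=0$ and, decisively, $\Gamma^1{}_{j\beta}=-\Gamma^j{}_{0\beta}$. The latter lets me re-express the $\alpha_i=j'\to 1$ terms through $\Gamma^{\alpha_i}{}_{0\beta}$, so that $\nabla_{e_\beta}T_{\aa}$ is written entirely in the coefficients $\Gamma^j{}_{0\beta}$; these are exactly the frame components of $\pi(\nabla_{e_\beta}k)$, because $\nabla_{e_\beta}k=\Gamma^\delta{}_{0\beta}e_\delta\in c^\perp$ (as $\Gamma^1{}_{0\beta}=0$) projects to $\Gamma^j{}_{0\beta}\pi(m_j)$. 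Substituting \eqref{map-expansion} into $\nabla_a k^b\langle T|k|Q\rangle_b=\Gamma^j{}_{0\beta}\langle T|k|Q\rangle_j$ and comparing term by term with $\nabla_{e_\beta}T_{\aa}$ then yields \eqref{key}. The main obstacle is precisely this last bookkeeping: recognizing through the metricity antisymmetry that the spatial-to-$1$ rotation coefficients are the negatives of the $0$-to-spatial ones, so that the two index families collapse onto the components of $\nabla k$ and reproduce the two-term structure $\delta^j_{\alpha_i}T_{\aa_i 1}-\delta^0_{\alpha_i}T_{\aa_i}{}^j$ of the bilinear map.
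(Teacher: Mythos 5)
Your proposal is correct and follows essentially the same route as the paper's own proof: reduce by $C^\infty$-linearity to monomial frame tensors $Q=e_{\alpha_1}\cdots e_{\alpha_r}$, discard the directional-derivative term because the components $T_{\aa}$ with $\bw{\aa}\geq s+1$ vanish identically, and then use the boost-weight count together with the antisymmetry $\Gamma_{\alpha\beta\gamma}=-\Gamma_{\beta\alpha\gamma}$ of the lowered rotation coefficients (giving $\Gamma^{1}{}_{0\beta}=0$ and $\Gamma^{1}{}_{j\beta}=-\Gamma^{j}{}_{0\beta}$) to isolate the two surviving families and match them with \eqref{map-expansion}. No gaps; the treatment of the $\bw{\aa}\geq s+2$ case and the ``furthermore'' claim also coincides with the paper's.
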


{\em Note}: For any vector field $X$ on $M$ the vector $X^a\nabla_a k^b$ belongs to $c^\perp$ at each point, and \eqref{key} is the symbolic notation for the collection of rigorous relations
\begin{equation}\label{key-b}
X^a(\nabla_{a}T_{c_{1}\cdots c_{r}})Q^{c_{1}\dots c_{r}}=\pi(X^a\nabla_{a}k^{b})\,\langle T\vert k \vert Q\rangle_{b}.
\end{equation}

\begin{proof}
Let $\{e_\alpha\}\equiv\{e_0,e_1,e_j\}=\{k,l,m_j\}$
be a local null frame completing $k$. It is sufficient to prove \eqref{key-b} for $X=e_\beta$ and $Q=e_{\alpha_1}\cdots e_{\alpha_r}$ with $\bw{\aa}=-\bo{Q}\geq s+1$; by property \ref{prop-zero} and equation \eqref{map-expansion} this comes down to showing that ($\delta_{\alpha\beta}\equiv\delta^\beta_\alpha$)
\begin{align}
&\label{nablaT bw geq s+2}\bw{\aa}\geq s+2:\quad \nabla_{\beta}T_{\aa}=0;\\
&\label{nablaT bw s+1}\bw{\aa}=s+1:\quad \nabla_{\beta}T_{\aa}=(\nabla_\beta k)^j\sum_{i=1}^r\left(\delta_{\alpha_i j}T_{\aa_i 1}-\delta_{\alpha_i 0}T_{\aa_i j}\right).
\end{align}
Let $\Gamma^\alpha_{\beta\gamma}=(\nabla_{\gamma}e_\beta)^\alpha$ denote the connection coefficients of the frame. As the frame is rigid [i.e., $\nabla(g(e_\alpha,e_\beta))=0$] the lowered coefficients $\Gamma_{\alpha\beta\gamma}=g(e_\alpha,\nabla_{\gamma}e_\beta)$ satisfy $\Gamma_{\alpha\beta\gamma}=-\Gamma_{\beta\alpha\gamma}$.
The frame components of $\nabla T$ are given by the well-known expression
\begin{equation*}
\nabla_{\beta}T_{\aa}=e_{\beta}(T_{\aa})-\sum_{i=1}^r\Gamma^\gamma_{\alpha_i\beta}T_{\aa_i\gamma}.
\end{equation*}
Since $\bo{T}=s$ we have $e_{\beta}(T_{\aa})=0$  for $\bw{\aa}\geq s+1$. Also, $\bw{\alpha_i}-\bw{\gamma}\geq 2$ for fixed $\alpha_i$ and $\gamma$, with equality only for $\alpha_i=0$ and $\gamma=1$, in which case $\Gamma^1_{0\beta}=\Gamma_{00\beta}=0$; hence a non-zero contribution $\Gamma^\gamma_{\alpha_i\beta}T_{\aa_i\gamma}\neq 0$ to the above expression requires
$$s\geq \bw{\aa_i\gamma}=\bw{\aa}+\bw{\gamma}-\bw{\alpha_i}\geq\bw{\aa}-1\geq s,$$ 
implying 
$\bw{\aa_i\gamma}=s$, $\bw{\alpha_i}=\bw{\gamma}+1$ and $\bw{\aa}=s+1$. 
Hence 
$\nabla_{\beta}T_{\aa}=0$ for $\bw{\aa}\geq s+2$, 
while for $\bw{\aa}=s+1$ we obtain, by $(\nabla_\beta k)^j=\Gamma^j_{0\beta}=\Gamma_{j0\beta}=-\Gamma_{0j\beta}=-\Gamma^{1}_{j\beta}$:
\begin{align*}
-\Gamma^\gamma_{\alpha_i\beta}T_{\aa_i\gamma}=-\delta^j_{\alpha_i}\Gamma^1_{j\beta}T_{\aa_i 1}-\delta^0_{\alpha_i}\Gamma^j_{0\beta}T_{\aa_i j}
=(\nabla_\beta k)^j\left(\delta_{\alpha_i j}T_{\aa_i 1}-\delta_{\alpha_i 0}T_{\aa_i j}\right).
\end{align*}
This establishes \eqref{nablaT bw geq s+2} and \eqref{nablaT bw s+1} and thus the theorem.
\end{proof} 

\section{Generic validity of the Kundt Conjecture}\label{sec:Kundtgeneric}
In this section we use the methods developed in the previous section to prove the validity of the Kundt conjecture under a genericity condition on the Ricci tensor (see theorem \ref{Thm:Ricci-generic}).  

Assume that $(M,g,c)$ is a Lorentzian manifold of dimension $n\geq 3$ with a null line distribution $c$ such that for some $N\in \mathbb{N},$ $\nabla^{m}Rm$ is type II or more special w.r.t.\ $c,$ for all $0\leq m\leq N,$ and $k$ is a non-vanishing section of $c.$ For each $r\geq 1$, we let $\mathcal{C}_{r,N}$ be the real vector subspace of $\mathcal{T}_{r}$ generated by the set of covariant rank $r$ tensors which can be obtained from $\{g,g^{-1},Rm,\dots \nabla^{N-1}Rm\}$ through index shuffles, tensor products and 
contractions in arbitrary indices.

If $p\in M$ consider the subspace of $c_{p}^{\perp}/c_{p}$ given by
\begin{equation}
K_{N}^{c}(p):=\Span\bigcup_{r=1}^{\infty}\langle \mathcal{C}_{r,N}\vert k \vert (\mathcal{B}^{-1}_{r})_{p}\rangle,
\end{equation}
Note that this subspace is independent of the choice of $k\in c_{p}$ and its dimension, 
\begin{equation}
d_{N}^c(p):=\Dim K_{N}^{c}(p),
\end{equation} 
is lower semi-continuous as a function on $M$.  
\begin{claim}\label{KundtOnK}
	The null-line distribution $c$ has the Kundt property on $K_{N}^{c}(p)$, in the sense that 
	\begin{equation}
	[\nabla_{a}k_{b}]X^{a}z^{b}=0,
	\end{equation}
	for all $p\in M,$ $X\in c^{\perp}_{p}$ and $z\in K_{N}^{c}(p).$
\end{claim}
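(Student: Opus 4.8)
The plan is to reduce the claim, by bilinearity, to a single generator $z=\langle T\vert k\vert Q\rangle_p$ with $T\in\mathcal{C}_{r,N}$ and $Q\in(\B{r}{-1})_p$, and then to read off the vanishing from the factorization theorem \ref{FactorizationTheorem} together with a boost-weight count. First I would observe that every $T\in\mathcal{C}_{r,N}$ satisfies $\bo{T}\leq 0$ on $M$: the generators $g,g^{-1},Rm,\dots,\nabla^{N-1}Rm$ all have boost order $\leq 0$ (the metric factors trivially, the curvature factors by the type II hypothesis), boost order is preserved by index shuffles, added under tensor products and not raised by contractions, so no positive boost weight can be produced. Extending $Q$ to a local section of $\B{r}{-1}(M)$ and applying theorem \ref{FactorizationTheorem} with $s=0$ (whose proof uses only $\bo{T}\leq s$, i.e.\ the vanishing of the components of $T$ of boost weight $>s$, rather than equality), I obtain at $p$, for every $X\in c_p^\perp$,
\begin{equation*}
[\nabla_a k_b]X^a z^b=\pi(X^a\nabla_a k^b)\langle T\vert k\vert Q\rangle_b=X^a(\nabla_a T_{c_1\cdots c_r})Q^{c_1\cdots c_r}.
\end{equation*}
Thus it suffices to prove that the right-hand side vanishes for all $X\in c_p^\perp$.

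The key step is to upgrade the boost order of the derivative, namely to show $\bo{\nabla T}\leq 0$. By linearity it is enough to treat a single generating monomial, and by the Leibniz rule $\nabla T$ is then a sum of terms in which exactly one factor $\nabla^{m}Rm$ ($0\leq m\leq N-1$) is replaced by $\nabla^{m+1}Rm$, the metric factors contributing nothing since $\nabla g=\nabla g^{-1}=0$. Because $m+1\leq N$, the hypothesis guarantees that each such differentiated factor is again of type II, so every term of $\nabla T$ is a product and contraction of boost-order-$\leq 0$ tensors and hence has boost order $\leq 0$. This is precisely where the assumption that $\nabla^m Rm$ is type II up to $m=N$ --- one order of differentiation beyond the generators of $\mathcal{C}_{r,N}$ --- is used.

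With $\bo{\nabla T}\leq 0$ in hand, the conclusion follows by counting boost weights in a null frame $\{k,l,m_j\}$ completing $k$. Writing the contraction as $X^\beta(\nabla T)_{\beta\aa}Q^{\aa}$, a nonzero term requires simultaneously: $X^\beta\neq 0$, hence $\beta\neq 1$ and $\bw{\beta}\geq 0$, since $X\in c^\perp$ has no $l$-component; $Q^{\aa}\neq 0$, hence $\bw{\aa}\geq 1$, since $Q\in\B{r}{-1}$; and $(\nabla T)_{\beta\aa}\neq 0$, hence $\bw{\beta}+\bw{\aa}\leq 0$, since $\bo{\nabla T}\leq 0$. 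The first two conditions force $\bw{\beta}+\bw{\aa}\geq 1$, contradicting the third, so every term vanishes and $X^a(\nabla_a T_{c_1\cdots c_r})Q^{c_1\cdots c_r}=0$. This establishes the claim.

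The main obstacle is the boost-order statement $\bo{\nabla T}\leq 0$ for $T\in\mathcal{C}_{r,N}$; once it is secured the remainder is bookkeeping through theorem \ref{FactorizationTheorem}. Its proof hinges on the stability of the type II property under the tensorial operations defining $\mathcal{C}_{r,N}$ and, crucially, on the single extra order of differentiation available by hypothesis, which is what keeps the derivative $\nabla T$ algebraically special. In the write-up I would take care to justify that tensor products add boost orders while contractions and index permutations do not raise them, and to flag explicitly that theorem \ref{FactorizationTheorem} is invoked with the relaxed hypothesis $\bo{T}\leq s$ in place of equality.
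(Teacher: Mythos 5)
Your proof is correct and follows essentially the same route as the paper's: use theorem \ref{FactorizationTheorem} to identify $[\nabla_a k_b]X^a z^b$ with $X^a(\nabla_a T_{c_1\cdots c_r})Q^{c_1\cdots c_r}$, then kill the latter by noting that $\nabla T$ is still algebraically special, so the contraction is proportional to $k_a$ and annihilates $X\in c_p^\perp$. The paper compresses the two points you elaborate --- closure of $\mathcal{C}_{r,N}$ under one extra covariant derivative via the Leibniz rule, and the harmless relaxation of $\bo{T}=s$ to $\bo{T}\leq s$ in the factorization theorem --- into the phrase ``by assumption,'' but the substance is identical.
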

\begin{proof}
	Suppose that $r\geq 1$, $T\in \mathcal{C}_{r,N}$ and $Q\in(\mathcal{B}_{r}^{-1})_{p}.$  By assumption $T$ and $\nabla T$ are type II or more special w.r.t.\ $c$ and therefore $[\nabla_{a}T_{c_{1}\cdots c_{r}}]Q^{c_{1}\cdots c_{r}}\propto k_{a}.$ Together with theorem \ref{FactorizationTheorem} this shows that 
	\begin{equation}
	k_{a}\propto \nabla_{a}k^{b}\langle T\vert k \vert Q\rangle_{b},
	\end{equation}
	which proves the claim.
\end{proof}
\begin{corollary}\label{TopDim}
	If $p\in M$ is a point such that $d_{N}^{c}(p)=n-2,$ then $c$ is Kundt on an open neighborhood of $p.$
\end{corollary}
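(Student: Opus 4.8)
The plan is to combine Claim \ref{KundtOnK} with the lower semi-continuity of $d_N^c$ and the observation that $n-2$ is the maximal possible value of $d_N^c$. First I would record that $K_N^c(q)$ is by construction a subspace of $c_q^\perp/c_q$, which has dimension $n-2$; hence $d_N^c(q)\leq n-2$ everywhere on $M$, with equality exactly when $K_N^c(q)=c_q^\perp/c_q$. Since $d_N^c$ is integer-valued and lower semi-continuous, the hypothesis $d_N^c(p)=n-2$ forces $d_N^c(q)\geq n-2$, and therefore $d_N^c(q)=n-2$, on an open neighborhood $U$ of $p$; equivalently $K_N^c(q)=c_q^\perp/c_q$ for every $q\in U$.

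On such a $U$, Claim \ref{KundtOnK} then asserts that $[\nabla_a k_b]X^a z^b=0$ for all $q\in U$, all $X\in c_q^\perp$ and all $z\in c_q^\perp/c_q$. (The pairing is well defined on the quotient because the value $X^a z^b\nabla_a k_b$ is insensitive to shifting the lift of $z$ by a multiple of $k$, as $k^b\nabla_a k_b=\tfrac12\nabla_a(k^bk_b)=0$.) The substance of the corollary is to decode this single vanishing relation into the geometric Kundt property, which I would do by decomposing $c_q^\perp=c_q\oplus S$, with $S$ a frame-dependent lift of the screen space $c_q^\perp/c_q$, and testing the condition on the two types of $X$.

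Taking $X\in S$ and letting $z$ range over $c_q^\perp/c_q$ shows that the screen-screen projection of $\nabla_a k_b$ vanishes; this projection is precisely the optical matrix of $c$, so its vanishing says that $c$ is non-expanding, non-shearing and non-twisting. Taking instead $X=k$ shows that $k^a\nabla_a k_b$ has vanishing screen part; since $k^a\nabla_a k^b\in c_q^\perp$ automatically (again because $k$ is null, so that $k^b\nabla_a k_b=0$ and hence also $k^b k^a\nabla_a k_b=0$), this forces $k^a\nabla_a k^b\in c_q$, i.e.\ geodesy of $c$. These are exactly the conditions packaged in $k_{[a}\nabla_{b]}k_{[c}k_{d]}=0$, so $c$ is Kundt at every $q\in U$, and therefore Kundt on the neighborhood $U$, as claimed.

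I expect the one genuinely substantive step to be this last translation: verifying that the abstract identity $[\nabla_a k_b]X^a z^b=0$ on all of $c^\perp\times(c^\perp/c)$ is equivalent to the standard optical (geodesic, expansion-, shear- and twist-free) formulation of the Kundt condition, and in particular that the $X=k$ slice correctly recovers geodesy while the $X\in S$ slices recover the vanishing of the optical scalars. By comparison, the semi-continuity and maximality argument that upgrades the pointwise hypothesis at $p$ to the open neighborhood $U$ is routine.
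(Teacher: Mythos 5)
Your proof is correct and follows essentially the same route as the paper: lower semi-continuity plus integer-valuedness of $d_N^c$ gives $K_N^c(q)=c_q^\perp/c_q$ on a neighborhood $U$, and then Claim \ref{KundtOnK} yields the Kundt property there. The only difference is that you explicitly unpack the identity $[\nabla_a k_b]X^a z^b=0$ into geodesy ($X=k$) and the vanishing of the optical matrix ($X$ in the screen space), a translation the paper leaves implicit in the phrase ``has the Kundt property on $K_N^c(p)$''; your unpacking is accurate.
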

\begin{proof}
	By lower semi-continuity of $d_N^c$, we can find an open neighborhood $U$ of $p$ such that $d_N^c(q)=n-2$, and therefore $K_{N}^{c}(q)=c_{q}^{\perp}/c_q$, for all $q\in U.$ By claim \ref{KundtOnK}, $c$ is Kundt on $U.$
\end{proof}

Suppose that the trace-free Ricci tensor, $S$, is if type II or more special w.r.t.~$c$ (i.e., $\bo{S}\leq 0\Leftrightarrow S\in \B{2}{0}$). This precisely means that $S$, when regarded as an endomorphism on tangent space, has $c$ as a null eigendirection. 
Choose $k\in c$ and complete to a null frame $\{e_0,e_1,e_i\}=\{k,l,m_i\}$. 
We have $S_{ab}k^b=\lambda k^a$, where $\lambda=S_{01}$ is the eigenvalue corresponding to the (unique) timelike generalized eigenspace $E_\lambda$ of $S$. Relative to the frame $\{l,m_i,k\}$ the matrix representation of $S$ takes a lower-triangular block form $\left[\begin{smallmatrix}
\lambda&0&0\\
S_{1i}&S_{ij}&0\\
S_{11}&S_{1i}&\lambda
\end{smallmatrix}\right]$, and from \eqref{map-expansion} we obtain
\begin{equation}\label{map-S}
\langle S|k|km_i\rangle_j = \lambda\delta_{ij}-S_{ij}\,. 
\end{equation}  
Generically, $\lambda$ is not an eigenvalue of the  $(n-2)\times (n-2)$ matrix $[S_{ij}]$; note that this corresponds to $E_\lambda$ being two-dimensional, and implies $\bo{S}=0$. This leads to

\begin{theorem}\label{Thm:Ricci-generic} Let $(M,g,c)$ be a Lorentzian manifold with a null congruence $c$. If at a point $p\in M$ the trace-free Ricci tensor $S$ is of generic type II w.r.t.\ $c$ in the sense that its timelike generalized eigenspace $E_\lambda$ has dimension 2, and if moreover $\bo{\nabla S}\leq 0$, then $c$ is Kundt on an open neighborhood of $p$.  
\end{theorem}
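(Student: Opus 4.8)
The plan is to show that the genericity hypothesis forces the vectors $\langle S|k|km_i\rangle$ (for $i=2,\dots,n-1$) to span the entire screen space $c^\perp/c$, and then to feed this into the already-established Kundt criterion of Corollary \ref{TopDim}. First I would fix $k\in c$ and a completing null frame $\{k,l,m_i\}$ near $p$, and record two elementary facts: that $S$ is a metric contraction of the Riemann tensor, hence $S\in\mathcal{C}_{2,N}$ for any $N\ge 1$, and that $km_i\in(\mathcal{B}_2^{-1})_p$ since $\bo{k}=-1$ and $\bo{m_i}=0$. Consequently each $\langle S|k|km_i\rangle$ lies in $K_N^c(p)$ by the very definition of the latter.

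The heart of the argument is the identity \eqref{map-S}, namely $\langle S|k|km_i\rangle_j=\lambda\delta_{ij}-S_{ij}$, which exhibits these $n-2$ vectors, in the basis $\{\pi(m_j)\}$, as the columns of the symmetric matrix $A:=\lambda I-[S_{ij}]$. I would then translate the genericity condition $\dim E_\lambda=2$ into the invertibility of $A$: reading off the block-lower-triangular form of $S$ in the frame $\{l,m_i,k\}$, the value $\lambda$ sits on the diagonal exactly twice alongside the eigenvalues of $[S_{ij}]$, so the algebraic multiplicity of $\lambda$ equals $2$ precisely when $\lambda\notin\operatorname{spec}[S_{ij}]$, i.e. precisely when $A$ is nonsingular. (In passing this reconfirms $\bo{S}=0$, so that $S$ is genuinely type II and the partial map $\langle S|k|\cdot\rangle$ indeed has domain $\mathcal{B}_2^{-1}$.) Invertibility of $A$ makes its columns a basis of $c^\perp/c$, whence $K_N^c(p)=c^\perp/c$ and $d_N^c(p)=n-2$.

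With $d_N^c(p)=n-2$ in hand, Corollary \ref{TopDim} immediately yields that $c$ is Kundt on an open neighborhood of $p$; the lower semicontinuity of $d_N^c$ built into that corollary is exactly what upgrades the pointwise genericity at $p$ to a Kundt congruence on a whole neighborhood, so no separate openness argument is needed. For a self-contained route that invokes only $\bo{S}\le 0$ and $\bo{\nabla S}\le 0$ rather than the alignment of all curvature tensors, one can bypass $K_N^c$: applying Theorem \ref{FactorizationTheorem} to $T=S$ and $Q=km_i$ gives $\pi(X^a\nabla_a k^b)\,\langle S|k|km_i\rangle_b=X^a(\nabla_a S_{c_1c_2})(km_i)^{c_1c_2}$, and since $\nabla S$ is type II and $km_i\in\mathcal{B}_2^{-1}$ the right-hand side is proportional to $X^a k_a=0$ for $X\in c^\perp$ (the boost-order bookkeeping underlying the proof of claim \ref{KundtOnK}); the spanning property then forces $\pi(X^a\nabla_a k^b)=0$, i.e. $X^a\nabla_a k^b\in c$ for all $X\in c^\perp$, which is the Kundt condition.

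I expect the only genuine obstacle to lie in the algebraic translation of the middle paragraph --- pinning down that ``$\dim E_\lambda=2$'' is equivalent to the nonsingularity of $A=\lambda I-[S_{ij}]$, and hence to the spanning of $c^\perp/c$ --- since everything else (the membership $S\in\mathcal{C}_{2,N}$, the identity \eqref{map-S}, and the propagation to a neighborhood via semicontinuity) is either immediate or already packaged in the preceding results.
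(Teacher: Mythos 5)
Your proposal is correct and follows essentially the same route as the paper: it uses \eqref{map-S} to identify $\langle S|k|km_i\rangle$ with the columns of $\lambda I-[S_{ij}]$, translates $\dim E_\lambda=2$ into invertibility of that matrix via the block-triangular form of $S$, concludes $d_N^c(p)=n-2$, and closes with Corollary \ref{TopDim} (the paper does exactly this with $N=1$). Your ``self-contained'' variant is just an unpacking of Claim \ref{KundtOnK} and Corollary \ref{TopDim}, and your middle paragraph supplies the multiplicity argument the paper states without proof; no gaps.
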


\begin{proof} Applying theorem \ref{FactorizationTheorem} to $T=S\in \B{2}{0}$ and $Q=km_i\in\B{2}{-1}$, and using \eqref{map-S}, we infer 
$$
0=X^a\nabla_a S_{0i}=X^a\nabla_a k^j(\lambda\delta_{ij}-S_{ij})
$$
for all $X^a\in c_p^\perp$. Since $S\in{\cal C}_{2,1}$ and $\lambda$ is not an eigenvalue of $[S_{ij}]$ this implies 
$d_1^c(p)=n-2$, and the theorem follows by corollary \ref{TopDim}.
\end{proof}

\section{The Kundt theorem for Ricci or Weyl type III or N}\label{sec:IIIorN}

In this section we prove the Kundt conjecture in the subcases where the trace-free Ricci or Weyl tensor is of (genuine) null alignment type III or N (in the sense of \cite{OrtPraPrareview}), by repeated use of theorem \ref{FactorizationTheorem}. In fact, we will prove slightly more general results, namely for arbitrary rank 2 symmetric tensors $S_{ab}=S_{ba}$ and symmetric double 2-forms $W_{abcd}=W_{[ab][cd]}=W_{cdab}$. In general, a tensor field of type III or N defines a unique null congruence $c$ along which it has a strictly negative boost order, namely -1 for type III and -2 for type N. In all cases we use a null frame $(e_0,e_1,e_i)=(k,l,m_i)$ with the only restriction that the vector field $k$ generates $c$; we also adopt the commonly used notation $\kappa_i\equiv(\nabla_0 k)_{i}$ and $\rho_{ij}\equiv(\nabla_j k)_{i}$, where $c$ is geodetic iff.~$\kappa_i=0$ for all $i$, and Kundt iff.~$\kappa_i=\rho_{ij}=0$ for all $i$ and $j$. 

Trivially, $k$ is of type III with $\bo{k}=-1$; propositions \ref{prop:Ric-III} and \ref{prop:Weyl-III} below can be seen as generalizations of the following initiatory result.

\begin{proposition}\label{prop:k-III} Let $c$ be a null congruence generated by a null vector field $k$. If $\bo{\nabla k}\leq 0$ then $c$ is geodetic. If moreover $\bo{\nabla\nabla k}\leq 0$ then $c$ is Kundt.
\end{proposition}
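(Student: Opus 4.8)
The plan is to apply Theorem~\ref{FactorizationTheorem} to the tensor $T=k$ itself, exploiting that $k$ is trivially of type III with $\bo{k}=-1$, and to read off the geodetic and Kundt conditions from the resulting frame identities. For the first claim I would set $T=k$, so $s=\bo{k}=-1$ and $r=1$, and observe that the hypothesis $\bo{\nabla k}\leq 0$ means $\nabla k$ is of aligned type II or more special w.r.t.~$c$, so that for any $Q\in\B{1}{0}=c^\perp$ one has $(\nabla_a k_c)Q^c\propto k_a$. By Theorem~\ref{FactorizationTheorem} this equals $\nabla_a k^b\langle k|k|Q\rangle_b$, and I would compute the partial map $\langle k|k|\cdot\rangle$ explicitly via \eqref{map-expansion}: with $T=k$ the only nonzero frame component is $k_0=1$ (i.e.~$T_\alpha=\delta^0_\alpha$), so $\langle k|k|Q\rangle$ picks out the $\pi(m_j)$ directions and is nonzero precisely on the spacelike part of $c^\perp$.

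\medskip
\noindent Concretely, the geodetic condition $\kappa_i=(\nabla_0 k)_i=0$ should emerge by choosing $X=e_0=k$ and $Q=m_i$ in the rigorous relation \eqref{key-b}. I would compute $X^a(\nabla_a k_c)Q^c=\kappa_i$ on the left, note that $\langle k|k|m_i\rangle_j=\delta_{ij}$ (the spatial index survives), and conclude from the type~II assumption that the left side, being $\propto k$ contracted against $m_i\in c^\perp$, must vanish; hence $\kappa_i=0$ for all $i$, giving geodesy. For the Kundt claim I would then apply the theorem a second time, now to $T=\nabla k$ with the stronger hypothesis $\bo{\nabla\nabla k}\leq 0$. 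Once $c$ is known to be geodetic, $\nabla k$ has a controlled boost order along $c$, and the same factorization argument applied to the mixed components should force $\rho_{ij}=(\nabla_j k)_i=0$; combined with $\kappa_i=0$ this is exactly the Kundt condition.

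\medskip
\noindent The main obstacle I anticipate is the bookkeeping for the second claim: to invoke Theorem~\ref{FactorizationTheorem} with $T=\nabla k$ I must first establish that $\bo{\nabla k}=s'$ is constant and determine its value, since the theorem requires $\bo{T}=s$ to hold identically on $M$ and pairs $T$ against $Q\in\B{r}{-s-1}$ of complementary boost order. After the first part gives geodesy, $\kappa_i=0$ ensures the potentially positive-boost-order components of $\nabla k$ vanish, so $\bo{\nabla k}\leq 0$; I would need to check whether it is exactly $0$ or lower and treat the appropriate $Q$ accordingly, being careful that the factorization identity \eqref{key-b} is contracted against the correct complementary space. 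A secondary subtlety is that $\nabla k$ is a rank-2 tensor that is not symmetric, so I must track index positions faithfully when reading off $\rho_{ij}$ from the frame expansion \eqref{map-expansion}, rather than assuming any symmetry. Once these boost-order values are pinned down, the vanishing of $\kappa_i$ and $\rho_{ij}$ follows from the same mechanism as in the first part, and the proposition follows.
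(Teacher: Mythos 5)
Your overall plan --- get geodesy from the vanishing of a positive boost weight component of $\nabla k$, then feed $T=\nabla k$ back into theorem \ref{FactorizationTheorem} --- is the paper's strategy, but two things go wrong in the execution. In the first part, the claim that $(\nabla_a k_c)Q^c\propto k_a$ for every $Q\in \B{1}{0}=c^\perp$ is false: since $\bo{\nabla k}\leq 0$ and $\bo{Q}\leq 0$, the contraction only has boost order $\leq 0$, i.e.\ it lies in $c^\perp$; its boost weight zero part is $\nabla_j k_i=\rho_{ij}$, which certainly need not vanish under the first hypothesis alone. (Also, in a null frame $k_\alpha=\delta^1_\alpha$, not $\delta^0_\alpha$; your value $\langle k|k|m_i\rangle_j=\delta_{ij}$ is right, but not for the reason you give.) Moreover, plugging $T=k$, $X=k$, $Q=m_i$ into \eqref{key-b} just returns the tautology $\kappa_i=\kappa_i$ and proves nothing. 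The correct (and one-line) argument, which is all the paper uses, is that $\nabla_0 k_i$ is a boost weight $+1$ component of $\nabla k$ and therefore vanishes when $\bo{\nabla k}\leq 0$.

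The more serious gap is in the Kundt half, which you do not actually carry out. Setting the boost weight $+1$ component $\nabla_j\nabla_i k_0$ of $\nabla\nabla k$ to zero and applying theorem \ref{FactorizationTheorem} to $T=\nabla k$ with $Q=m_i k$ does \emph{not} produce a linear condition on $\rho$ analogous to the first part; it produces the quadratic identity $\rho^{l}{}_{j}\rho_{li}=0$ of \eqref{k-III} (after discarding the term with $\nabla_1 k_0=0$). To pass from $\rho^{l}{}_{j}\rho_{li}=0$ to $\rho_{ij}=0$ one must contract on $(i,j)$ to get the sum of squares $\rho^{lj}\rho_{lj}=0$ and invoke positive definiteness of the spatial metric; this trace-and-positivity step is the essential mechanism of the second claim (and recurs throughout propositions \ref{prop:Ric-III}--\ref{prop:Weyl-N}), and it is absent from your proposal. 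Saying the conclusion follows ``by the same mechanism as in the first part'' is precisely where the argument breaks, because the first part is linear and the second is not. You do rightly flag that $\bo{\nabla k}$ must be pinned down before reusing the theorem; note that if $\bo{\nabla k}\leq -1$ at a point then $\rho_{ij}$ vanishes there trivially, so the only case needing work is $\bo{\nabla k}=0$, and there the computation and positivity argument above are required.
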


\begin{proof} If $\bo{\nabla k}\leq 0$ we have $0=\nabla_0 k_i$, so $c$ is geodetic. If moreover $\bo{\nabla\nabla k}\leq 0$ then 
\begin{equation}\label{k-III}
	0=\nabla_{j}\nabla_{i}k_0=\nabla_j(\nabla k)_{i0}\stackrel{(i)}{=}\nabla_j k_i \nabla_1k_0-\nabla^l k_j{}\nabla_ik_l\stackrel{(ii)}{=}-\rho^l{}_j\rho_{li}
\end{equation}
where we used theorem \ref{FactorizationTheorem} applied to $T=\nabla k$ and $Q=m_i k$ in (i), and $\nabla_1 k_0=0$ in (ii). Contracting \eqref{k-III} on $(i,j)$ gives $\rho^{lj}\rho_{lj}=0$ and thus $\rho_{lj}=0$ for all $l$ and $j$, so $c$ is Kundt. 
\end{proof}


Let us also note a priori that the boost order of an arbitrary tensor field $T$ along any null congruence $c$ goes up with at most 2 under covariant differentiation ($\bo{T}=s\Rightarrow\bo{\nabla T}\leq s+2$), and with at most 1 if $c$ is geodetic ($\bo{T}=s,\,\nabla_0 k\propto k\Rightarrow\bo{\nabla T}\leq s+1$), as is well known and follows immediately from theorem \ref{FactorizationTheorem}.  

\subsection{Rank 2 symmetric tensors}\label{subsec:rank2symm}

\begin{proposition}\label{prop:Ric-III} Suppose that a rank 2 symmetric tensor $S$ is of type III, and has boost order -1 along the null congruence $c$, $\bo{S}=-1$. If $\bo{\nabla S}\leq 0$ then $c$ is geodetic. If moreover $\bo{\nabla\nabla S}\leq 0$ then $c$ is Kundt.
\end{proposition}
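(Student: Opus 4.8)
The plan is to mirror the strategy of Proposition \ref{prop:k-III}, applying theorem \ref{FactorizationTheorem} first to $T=S$ (for the geodetic part) and then to $T=\nabla S$ (for the Kundt part). Fix a null frame $(e_0,e_1,e_i)=(k,l,m_i)$ completing $c$. Since $\bo{S}=-1$, the only possibly non-zero frame components of $S$ have boost weight $-1$ or $-2$, namely $S_{1i}$ and $S_{11}$; genuine type III means $S_{1i}\neq 0$ for some $i$, so that $\|S_1\|^2:=\sum_iS_{1i}^2>0$. Reading \eqref{map-expansion} off for $r=2$, $s=-1$ yields the frame representation of $\langle S|k|\cdot\rangle$ on the boost-order-zero tensors; in particular $\langle S|k|m_pm_q\rangle_l=S_{1q}\delta_{lp}+S_{1p}\delta_{lq}$ and $\langle S|k|kl\rangle_l=-S_{1l}$, which are the data I would feed the factorization identity.

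For the first claim I apply theorem \ref{FactorizationTheorem} to $T=S$ with $Q=m_pm_q\in\B{2}{0}$ and derivative direction $X=e_0=k$. The left-hand side is the frame component $\nabla_0S_{pq}$, of boost weight $+1$, which vanishes because $\bo{\nabla S}\leq 0$; since $\pi(\nabla_0k)=\sum_i\kappa_i\pi(m_i)$ the right-hand side equals $\kappa_pS_{1q}+\kappa_qS_{1p}$. Thus $\kappa_pS_{1q}+\kappa_qS_{1p}=0$ for all $p,q$: putting $q=p$ gives $\kappa_pS_{1p}=0$ (no sum), and feeding in an index with $S_{1q_0}\neq 0$ then forces every $\kappa_p=0$. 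Hence $c$ is geodetic, exactly as the $\kappa_i$ of Proposition \ref{prop:k-III} are killed there.

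For the Kundt claim I use that $c$ is now geodetic, so $\bo{\nabla S}\leq 0$ and the same factorization identity with spatial $X=e_l$ gives the geometric relations $\nabla_lS_{pq}=\rho_{pl}S_{1q}+\rho_{ql}S_{1p}$ and $\nabla_jS_{01}=-\sum_m\rho_{mj}S_{1m}$, while $\nabla_1S_{0q}=0$ follows from the strong form \eqref{nablaT bw geq s+2} because $\bw{0q}=1=s+2$. If $\bo{\nabla S}<0$ the component $\nabla_lS_{pq}$ itself vanishes and the first-part argument (now applied to $\rho$ in place of $\kappa$) already gives $\rho_{pl}=0$, so I may assume $\bo{\nabla S}=0$. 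I then exploit the vanishing of the boost-weight $+1$ component $\nabla_i\nabla_jS_{0q}$ of $\nabla\nabla S$ (guaranteed by $\bo{\nabla\nabla S}\leq 0$), expanding it via theorem \ref{FactorizationTheorem} applied to $T=\nabla S$ with $X=e_i$ and $Q=m_jkm_q$. Substituting the three first-order identities above into the rank-$3$ version of \eqref{map-expansion} collapses every boost-weight-zero component of $\nabla S$ into expressions in $\rho$ and $S_1$, leaving
\[ S_{1q}R_{ij}+\rho_{qj}v_i+\rho_{qi}v_j=0,\qquad R_{ij}:=\sum_l\rho_{li}\rho_{lj},\quad v_i:=\sum_l\rho_{li}S_{1l}. \]

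The endgame is a definiteness argument. Contracting the displayed identity with $S_{1q}$ and using $\sum_qS_{1q}\rho_{qi}=v_i$ gives $\|S_1\|^2R_{ij}=-2v_iv_j$. The left-hand side is a Gram matrix, hence positive semi-definite, whereas the right-hand side is negative semi-definite; since $\|S_1\|^2>0$ this forces $R_{ij}=0$, and taking the trace $\sum_{i,l}\rho_{li}^2=0$ yields $\rho_{ij}=0$ for all $i,j$, so that $c$ is Kundt. The main obstacle I anticipate is the bookkeeping of the second-order step: one must check that \emph{every} boost-weight-zero component of $\nabla S$ produced by the rank-$3$ map expansion is geometrically controlled — the crucial subtlety being that $\nabla_1S_{0q}$ is not merely of low boost weight but actually vanishes through \eqref{nablaT bw geq s+2} — so that the resulting identity closes on $\rho$ and $S_1$ alone and the positive-versus-negative semi-definiteness can be invoked.
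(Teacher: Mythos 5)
Your proof is correct and follows essentially the same route as the paper: theorem \ref{FactorizationTheorem} applied to $T=S$ with $X=k$ for the geodetic part, then nested applications to $T=\nabla S$ and $T=S$ to reduce the vanishing boost-weight $+1$ components of $\nabla\nabla S$ to a cubic identity in $\rho$ and $S_{1i}$, closed off by a positivity argument. The only (immaterial) differences are that the paper also uses $\nabla_0S_{01}=0$ in the geodetic step and finishes the Kundt step by contracting to the scalar identity $2w^iw_i+(\rho^{lj}\rho_{lj})v^iv_i=0$ rather than your matrix semi-definiteness comparison.
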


\begin{proof} For convenience we write $v_i\equiv S_{1i}=S_{i1}$. 
As $\bo{S}=-1$ we have $v^j v_j>0$. 
Suppose that $\bo{\nabla S}\leq 0$. In particular, this requires $\nabla_0S_{ij}=\nabla_0S_{01}=0$. We can apply theorem \ref{FactorizationTheorem} to $T=S$ and $Q=m_im_j$ \{$Q=kl$\}, taking $X=k$ in \eqref{key-b}; this corresponds to $\beta=0$ and $\aa=ij$ \{$\aa=01$\} in \eqref{nablaT bw s+1}~, and gives
\begin{equation}\label{eq-III-geod}
0=\nabla_0S_{ij}=\kappa_i v_j+\kappa_j v_i,\quad 
0=\nabla_0S_{01}=-\kappa^j v_j.
\end{equation}
Contracting the first equation with $v^j$ and using the second equation yields $\kappa_i(v^jv_j)=0$; hence $\kappa_i=0$ for all $i$, i.e., $c$ is geodetic. 

Suppose now that also $\bo{\nabla\nabla S}\leq 0$. In particular we have $\nabla_k\nabla_jS_{i0}=0$. First we apply theorem \ref{FactorizationTheorem} to $T=\nabla S$, 
taking $\aa=ji0$ and $\beta=k$ in \eqref{nablaT bw s+1}, to obtain 
$$
0=\nabla_k(\nabla S)_{ji0}=\rho_{jk}\nabla_1S_{i0}+\rho_{ik}\nabla_j S_{10}-\rho^l{}_k\nabla_jS_{il}.
$$
For each term in the right hand side we can now apply theorem \ref{FactorizationTheorem} to $T=S$; \eqref{nablaT bw geq s+2} with $\aa=i0$ implies that the first term vanishes, while \eqref{nablaT bw s+1} with $\beta=j$ and $\aa=10$, resp.~$\aa=il$ yields
\begin{align}\label{eq-III-kundt}
0=-(\rho^l{}_{j}\rho_{ik}+\rho^l{}_{k}\rho_{ij})v_l-\rho^l{}_{k}\rho_{lj}v_i.
\end{align}
Note that this equation is symmetric in $(j,k)$ because $\nabla_k\nabla_jS_{i0}=\nabla_j\nabla_kS_{i0}$, due to the generalized Ricci identity applied to $S$. 
Contracting \eqref{eq-III-kundt} with $v^i$ and on $(j,k)$, and putting $w_i=v^l\rho{}_{li}$, yields $2w^iw_i+(\rho^{lj}\rho_{lj})v^iv_i=0$; since $w^iw_i$ and $\rho^{lj}\rho_{lj}$ are non-negative and $v^iv_i>0$, this implies $\rho_{lj}=0$ for all $l$ and $j$, and thus $c$ is Kundt.  
\end{proof}

\begin{proposition}\label{prop:Ric-N} Suppose that a rank 2 symmetric tensor $S$ is of type N, and has boost order -2 along the null congruence $c$, $\bo{S}=-2$. If $\bo{\nabla\nabla S}\leq 0$ then $c$ is geodetic. If moreover $\bo{\nabla\nabla\nabla S}\leq 0$ then $c$ is Kundt.
\end{proposition}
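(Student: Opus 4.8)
The plan is to follow the template of proposition \ref{prop:Ric-III}, shifted down by one unit in boost weight and one covariant derivative: the geodetic property will be read off from the single top-boost-weight component of $\nabla\nabla S$, and the Kundt property from a single top-boost-weight component of $\nabla\nabla\nabla S$, each evaluated by repeated use of theorem \ref{FactorizationTheorem}. Since $S$ is genuinely of type N, its only non-vanishing frame component is $\sigma\equiv S_{11}\neq 0$ (of boost weight $-2$). Thus theorem \ref{FactorizationTheorem} applied to $T=S$ (with $s=-2$) gives $\nabla_\beta S_{\aa}=0$ whenever $\bw{\aa}\geq 0$, together with the single non-trivial layer $\nabla_\beta S_{1i}=(\nabla_\beta k)_i\,\sigma$; in particular $\nabla_0 S_{1i}=\kappa_i\sigma$.

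For the first assertion I would inspect the (unique) boost weight $+2$ component of $\nabla\nabla S$. Writing $W=\nabla S$, which satisfies $\bo{W}\leq 0$, the component $W_{0ij}=\nabla_0 S_{ij}$ sits at boost weight $+1$, so theorem \ref{FactorizationTheorem} expresses $\nabla_0\nabla_0 S_{ij}=\nabla_0 W_{0ij}$ through the boost weight $0$ data of $W$; feeding in $\nabla_0 S_{1i}=\kappa_i\sigma$ I expect $\nabla_0\nabla_0 S_{ij}=2\,\sigma\,\kappa_i\kappa_j$. The hypothesis $\bo{\nabla\nabla S}\leq 0$ annihilates this component, and since $\sigma\neq 0$ the diagonal $i=j$ gives $\kappa_i=0$ for all $i$, i.e.\ $c$ is geodetic.

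Assuming now $c$ geodetic, so $\kappa_i=0$ and hence $\bo{\nabla S}\leq -1$, $\bo{\nabla\nabla S}\leq 0$, the boost order rises by at most one per derivative and every bw-raise is carried by the spatial coefficients $\rho_{ij}=(\nabla_j k)_i$. I would therefore target the purely spatially differentiated boost weight $+1$ component $\nabla_k\nabla_j\nabla_l S_{0i}$ of $\nabla\nabla\nabla S$. Peeling the outer derivative with theorem \ref{FactorizationTheorem} (applied to $\nabla\nabla S$, of boost order $\leq 0$) writes it in terms of boost weight $0$ components of $\nabla\nabla S$; those carrying a covariant-derivative index equal to $1$ drop out by theorem \ref{FactorizationTheorem}, leaving $\nabla_j\nabla_l S_{pi}$ and $\nabla_j\nabla_l S_{01}$, which two further applications of theorem \ref{FactorizationTheorem} evaluate as $\sigma(\rho_{pj}\rho_{il}+\rho_{ij}\rho_{pl})$ and $-\sigma\,\rho^q{}_j\rho_{ql}$. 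Substituting and using $\bo{\nabla\nabla\nabla S}\leq 0$ together with $\sigma\neq 0$, I obtain (after relabelling the free spatial indices) the cubic identity
\begin{equation*}
P_{ab}\rho_{ic}+\rho_{ib}P_{ac}+\rho_{ia}P_{bc}=0,\qquad P_{ab}\equiv\rho^p{}_a\rho_{pb},
\end{equation*}
in which $P$ is symmetric and positive semi-definite.

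The step I expect to be the genuine obstacle is converting this cubic relation into a definite-sign conclusion: unlike the quadratic relation \eqref{eq-III-kundt} in proposition \ref{prop:Ric-III}, here $\rho$ is not symmetric (a non-vanishing twist is a priori allowed) and enters to the third power, so no single contraction is visibly a sum of squares. My plan is a two-step contraction. First tracing over $(a,b)$ gives $\mathrm{tr}(P)\,\rho_{ic}+2\,\rho_{ia}P^a{}_c=0$; contracting this with $\rho^{ic}$ and using $\mathrm{tr}(P)=\rho^{pa}\rho_{pa}=\|\rho\|^2$ and $\rho^i{}_a\rho_{ic}=P_{ac}$ then collapses everything to
\begin{equation*}
\|\rho\|^4+2\,P_{ab}P^{ab}=0.
\end{equation*}
Both summands are non-negative, so $\|\rho\|^2=0$, whence $\rho_{ij}=0$ for all $i,j$ and $c$ is Kundt.
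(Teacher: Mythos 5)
Your proposal is correct and follows essentially the same route as the paper's proof: the geodetic part from the component $\nabla_0\nabla_0 S_{ij}=2S_{11}\kappa_i\kappa_j$, and the Kundt part from a triple spatial derivative of $S_{0i}$ peeled three times with theorem \ref{FactorizationTheorem}, yielding exactly the paper's cubic identity \eqref{eq-N-kundt} (your $P_{ab}$ is the paper's $A_{ab}=\rho^m{}_a\rho_{mb}$), which is then killed by the same contraction with $\rho^{ic}\delta^{ab}$ giving $\|\rho\|^4+2P_{ab}P^{ab}=0$. The terms you say ``drop out'' because they carry a derivative index $1$ do indeed vanish (the paper routes one of them through the generalized Ricci identity, but a direct application of theorem \ref{FactorizationTheorem} also gives zero there), so there is no gap.
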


\begin{proof} As $\bo{S}=-2$ we have $S_{11}\neq 0$ and $\bo{\nabla S}\leq 0$. 
Suppose that $\bo{\nabla\nabla S}\leq 0$, such that in particular $\nabla_0\nabla_0 S_{ij}=0$. Again, we start applying theorem \ref{FactorizationTheorem} to $T=\nabla S$, where we now take $\aa=0ij$ and $\beta=0$ in \eqref{nablaT bw s+1}, and then apply the theorem to $T=S$ for each resulting term, making appropriate use of \eqref{nablaT bw geq s+2} and \eqref{nablaT bw s+1}. This leads to
\begin{equation}\label{eq-N-geod}
0=\nabla_0(\nabla S)_{0ij}=-\kappa^m\nabla_mS_{ij}+\kappa_i\nabla_0 S_{1j}+\kappa_j\nabla_0 S_{i1}=\kappa_i\kappa_j S_{11},
\end{equation}
which immediately implies $\kappa_i=0$ for all $i$, i.e., $c$ is geodetic. 

Suppose $\bo{\nabla\nabla\nabla S}\leq 0$ on top of $\bo{\nabla\nabla S}\leq 0$. In particular, $\nabla_l\nabla_k\nabla_j S_{i0}=0$. We apply theorem 2 to $T=\nabla\nabla S$, taking $\aa=kji0$ and $\beta=l$ in \eqref{nablaT bw s+1}, to obtain
\begin{equation*}
0=\nabla_l(\nabla\nabla S)_{kji0}=\rho_{kl}\nabla_1(\nabla S)_{ji0}+\rho_{jl}\nabla_k (\nabla S)_{1i0}+\rho_{il}\nabla_k (\nabla S)_{j10}-\rho^m{}_l\nabla_k(\nabla S)_{jim}.
\end{equation*}
Since $c$ is geodetic we have $\bo{\nabla S}\leq -1$. Hence we can appropriately apply theorem 2 to $T=\nabla S$ for each term of the right hand side. The first term vanishes by \eqref{nablaT bw geq s+2}. Given $\bo{S}=-2$, the generalized Ricci identity applied to $S$ entails $\nabla_k (\nabla S)_{1i0}=\nabla_1 (\nabla S)_{ki0}$, such that the second term also vanishes by \eqref{nablaT bw geq s+2}.
Using \eqref{nablaT bw s+1} for the last two terms we obtain
\begin{equation*}
0=\rho_{il}(\rho_{jk}\nabla_1 S_{10}-\rho^m{}_k\nabla_j S_{1m})-\rho^m{}_l(\rho_{jk}\nabla_1 S_{im}+\rho_{ik}\nabla_j S_{1m}+\rho_{mk}\nabla_j S_{i1}).
\end{equation*}
Applying theorem 2 to $T=S$ with appropriate use of \eqref{nablaT bw geq s+2} and \eqref{nablaT bw s+1} eventually yields
\begin{equation}\label{eq-N-kundt}
0=-(\rho_{il}\rho^m{}_k\rho_{mj}+\rho^m{}_l\rho_{ik}\rho_{mj}+\rho^m{}_l\rho_{mk}\rho_{ij})S_{11}.
\end{equation}
Note that this equation is fully symmetric in $(j,k,l)$ because $\nabla_l\nabla_k\nabla_j S_{i0}=\nabla_k\nabla_l\nabla_j S_{i0}=\nabla_l\nabla_j\nabla_k S_{i0}$, due to the generalized Ricci identity applied to $S$ and $\nabla S$. Contracting \eqref{eq-N-kundt} with $\rho^{il}$ and on $(j,k)$, and putting $A_{ij}=\rho^m{}_i\rho_{mj}$, yields $2A^{ij}A_{ij}+\rho^{ij}\rho_{ij}=0$;  
hence $\rho_{ij}=0$ for all $i$ and $j$, so $c$ is Kundt. 
\end{proof}

\subsection{Symmetric double 2-forms}\label{subsec:symm 2-form}

Symmetric double 2-forms lead to analogous propositions. One starts by deriving relevant surrogates for equations \eqref{eq-III-geod}-\eqref{eq-N-kundt}, successively using theorem 2 in the same way as in the respective corresponding cases for rank 2 symmetric tensors; we will merely provide the results of these calculations. From here on, the confirmation of the geodetic and Kundt properties of $c$ is slightly more complex.


\begin{proposition}\label{prop:Weyl-III} Suppose that a symmetric double 2-form $W$ is of type III, and has boost order -1 along the null congruence $c$, $\bo{W}=-1$. If $\bo{\nabla W}\leq 0$ then $c$ is geodetic. If moreover $\bo{\nabla\nabla W}\leq 0$ then $c$ is Kundt.
\end{proposition}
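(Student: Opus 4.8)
The plan is to mirror the proof of Proposition \ref{prop:Ric-III}, using Theorem \ref{FactorizationTheorem} repeatedly; the only essential difference is that the leading (boost order $-1$) part of $W$ is no longer a single frame vector $v_i$ but splits into two families of components, $p_i:=W_{011i}$ and the (in its last index pair antisymmetric) array $W_{1ijk}$. The hypothesis $\bo{W}=-1$ says precisely that these do not all vanish, so that the non-negative frame scalar $p^ip_i+\tfrac12 W_{1ijk}W_1{}^{ijk}$ is strictly positive; this is the surrogate of the factor $v^iv_i>0$ used in the rank $2$ case.

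For the geodetic statement I would first note that $\bo{\nabla W}\le 0$ forces every boost weight $+1$ component of $\nabla W$ to vanish, in particular $\nabla_0 W_\aa=0$ for every frame label string $\aa$ with $\bw{\aa}=0$. Applying \eqref{nablaT bw s+1} to $T=W$ (with $s=-1$, $\beta=0$) for the strings $\aa=0101,\ 0i1j,\ 01ij$ and $ijkl$ then yields the surrogates of \eqref{eq-III-geod}: one reads off $\kappa^m p_m=0$, the relation $-\kappa^m W_{mi1j}+\kappa_i p_j=0$, and two further relations linear in $\kappa_i$ and in the leading components. The key algebraic observation is that $W_{1ijk}$ is antisymmetric in its last pair, so that on contracting the ``$0i1j$'' relation with itself (over both free indices) the $\kappa^m\kappa^i$ cross term drops out and one is left with $|\kappa^m W_{mi1j}|^2+(\kappa^m\kappa_m)(p^jp_j)=0$. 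Hence $\kappa^m W_{mi1j}=0$ and $(\kappa^m\kappa_m)(p^jp_j)=0$. If $p\ne 0$ this already gives $\kappa_i=0$; if $p=0$ the ``$01ij$'' relation yields in addition $\kappa^m W_{1mij}=0$, and contracting the ``$ijkl$'' relation with $\kappa^i$ collapses it, using the two vanishing contractions just obtained, to $(\kappa^m\kappa_m)W_{1jkl}=0$, forcing $\kappa_i=0$ since $W_{1ijk}\ne 0$ in this subcase. Either way $c$ is geodetic.

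For the Kundt statement, geodeticity gives $\bo{\nabla W}\le 0$, so I can apply Theorem \ref{FactorizationTheorem} first to $T=\nabla W$ and then, term by term, to $T=W$, exactly as in the passage from \eqref{eq-III-geod} to \eqref{eq-III-kundt}. Using the generalized Ricci identities for $W$ and for $\nabla W$ to commute the two outer derivative indices, which makes the resulting identity symmetric in the free spatial derivative labels and annihilates several terms, this produces the type III analogue of \eqref{eq-III-kundt}: a relation whose right-hand side is quadratic in $\rho_{ij}$ and linear in the leading components $p_i$ and $W_{1ijk}$. Contracting this identity with $\rho$ and with the leading components, again exploiting the antisymmetry of $W_{1ijk}$ to cancel the indefinite cross terms and splitting off the $p=0$ subcase as above, I expect to reach an expression of the form (sum of squares in $\rho$) $\times$ (positive norm) $=0$, which forces $\rho_{ij}=0$, i.e.\ $c$ Kundt.

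The main obstacle is precisely this contraction step. In the rank $2$ case the single vector $v_i$ makes the sign-definite combination immediate (cf.\ the relation $2w^iw_i+(\rho^{lj}\rho_{lj})v^iv_i=0$ in Proposition \ref{prop:Ric-III}); here the two leading families produce many cross terms and it is not a priori clear that one contraction yields something manifestly non-negative. The geodetic computation indicates the resolution: the antisymmetry of $W_{1ijk}$ systematically removes the dangerous $\kappa\kappa$ (respectively $\rho\rho$) cross terms, and the genuinely degenerate direction is recovered by feeding the top ``$ijkl$''-type relation back with one further contraction. Carrying this bookkeeping through the considerably larger number of terms in the Kundt step, while correctly tracking the Ricci-identity symmetrizations, is where the real work lies.
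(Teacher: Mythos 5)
Your geodetic half is sound and essentially coincides with the paper's argument: the relations you extract from the strings $0101$, $0i1j$, $01ij$, $ijkl$ are exactly \eqref{III-ijkl}--\eqref{III-0i1j} (with $p_j=-\Psi_j$), and your self-contraction of the $0i1j$ relation is just a repackaging of the paper's contraction of \eqref{III-0i1j} with $\kappa^i$, both resting on the antisymmetry $W_{mi1j}=-W_{im1j}$ to kill the cross term. The case split $p\neq 0$ versus $p=0$ is cosmetically different from the paper's route (which first derives $\Psi_j=0$ from $\kappa^i\kappa_i\neq 0$ and then feeds the vanishing contractions into the $ijkl$ relation to force $\Psi_{jkl}=0$), but it is correct.

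The Kundt half, however, is a plan rather than a proof, and you have correctly located the gap yourself: you do not exhibit the contraction that closes the argument, and you conjecture it should take the form of a manifestly non-negative quadratic expression in $\rho$ times a positive norm, as in Proposition \ref{prop:Ric-III}. The paper does \emph{not} proceed that way, and I would not expect your sum-of-squares to materialize from the full identities \eqref{W-III-a0}--\eqref{W-III-b0}: the cross terms between the two leading families $\Psi_i$ and $\Psi_{ijk}$ are not removed by antisymmetry alone. Two ideas you are missing are: (i) set \emph{both} outer derivative labels equal to a single fixed spatial index $l^*$, so that only one column $\rho_i:=\rho_{il^*}$ of the expansion--shear matrix appears, reducing \eqref{W-III-a0}--\eqref{W-III-b0} to the manageable \eqref{W-III-a}--\eqref{W-III-b}; and (ii) argue by contradiction with the alignment type rather than by positivity: assuming $\rho^k\rho_k\neq 0$, a chain of contractions with $\rho$ (first $\rho^j\rho^k$ into \eqref{W-III-a} combined with \eqref{W-III-b} to get $\Psi_i=0$, then single contractions with $\rho^k$ and $\rho^j$, and an $i\leftrightarrow k$ interchange) successively forces $\rho^q\Psi_{qij}=\rho^q\Psi_{ijq}=0$ and finally $\Psi_{ijk}=0$, contradicting $\bo{W}=-1$; hence $\rho_{kl^*}=0$ for every $k$ and every $l^*$. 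Until you carry out some such explicit computation, the second assertion of the proposition remains unproved in your write-up.
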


\begin{proof} For convenience we write $\Psi_{ijk}\equiv W_{1ijk}$ and $\Psi_i\equiv W_{101i}$; given $\bo{W}=-1$ these components do not all vanish. Suppose that $\bo{\nabla W}\leq 0$. Applying theorem~\ref{FactorizationTheorem} to $T=W$ we obtain  
\begin{align}
\label{III-ijkl}&0=\nabla_0 W_{ijkl}=\kappa_i\Psi_{jkl}-\kappa_j\Psi_{ikl}+\kappa_k\Psi_{lij}-\kappa_l\Psi_{kij}\;;\\
\label{III-01ij}&0=\nabla_0 W_{01ij}=\kappa^m\Psi_{mij}-\kappa_i\Psi_j+\kappa_j\Psi_i\;;\\
\label{III-0i1j}&0=\nabla_0 W_{0i1j}=-\kappa^m\Psi_{jmi} -\kappa_i\Psi_j\;.
\end{align}
Assume $c$ is not geodetic, i.e., $\kappa^i\kappa_i\neq 0$. Contracting \eqref{III-0i1j} with $\kappa^i$ gives $\Psi_j=0$ for all $j$; from \eqref{III-ijkl}-\eqref{III-0i1j} we then obtain 
$$
0=\kappa^i\nabla_0 W_{ijkl}=(\kappa^i\kappa_i)\Psi_{jkl}
$$
and thus $\Psi_{jkl}=0,$ for any $j,k,l$, which is a contradiction. 
Hence $c$ is geodetic. 

Suppose that also $\bo{\nabla\nabla W}\leq 0$. In particular, $\nabla_m\nabla_l W_{ijk0}=\nabla_m\nabla_l W_{i010}=0$ (which are symmetric in $(l,m)$ as before). By applying theorem 2 to $T=\nabla W$ and consecutively to $T=W$ these conditions become 
\begin{align}\label{W-III-a0}
&\begin{aligned}
0=\;&(\rho^q{}_l\rho_{jm}+\rho^q{}_m\rho_{jl})\Psi_{ikq}-(\rho^q{}_l\rho_{im}+\rho^q{}_m\rho_{il})\Psi_{jkq}-(\rho^q{}_l\rho_{km}+\rho^q{}_m\rho_{kl})\Psi_{qij}\\
&+\rho^q{}_l\rho_{qm}\Psi_{kij}+(\rho_{kl}\rho_{im}+\rho_{km}\rho_{il})\Psi_{j}-(\rho_{kl}\rho_{jm}+\rho_{km}\rho_{jl})\Psi_{i}\;;
\end{aligned}\\
&\label{W-III-b0}
0=(\rho^q{}_l\rho^r{}_{m}+\rho^q{}_m\rho^r{}_{l})\Psi_{qir}-2(\rho^q{}_l\rho_{im}+\rho^q{}_m\rho_{il})\Psi_q+\rho^q{}_l\rho_{qm}\Psi_{i}\;.
\end{align}
Take an arbitrary but fixed index $l^*\in\{2,\ldots,n-1\}$, put both $l$ and $m$ equal to $l^*$, and write $\rho_i\equiv\rho_{il^*}$ for convenience. Then \eqref{W-III-a0} and \eqref{W-III-b0} specify to
\begin{align}
\label{W-III-a}&0=2\rho^q(\rho_j\Psi_{ikq}-\rho_i\Psi_{jkq}-\rho_k\Psi_{qij})+\rho^q\rho_q\Psi_{kij}+2\rho_k(\rho_i\Psi_j-\rho_j\Psi_i)\;;\\
\label{W-III-b}&0=2\rho^q\rho^r\Psi_{qir}-4\rho_i(\rho^q\Psi_q)+\rho^q\rho_q\Psi_i\;.
\end{align}
Assume $\rho^k\rho_k\neq 0$. Contracting \eqref{W-III-a} with $\rho^j\rho^k$ and using \eqref{W-III-b} yields 
$
-\tfrac{3}{2}(\rho^k\rho_k)^2\Psi_i=0
$
and hence $\Psi_i=0$ for all $i$. Herewith \eqref{W-III-b} reduces to $\rho^q\rho^r\Psi_{qir}=0$. Now separate contractions of \eqref{W-III-a} with $\rho^k$ and $\rho^j$ lead to $(\rho^k\rho_k)\rho^q\Psi_{qij}=0$ and $(\rho^j\rho_j)(2\rho^q\Psi_{ikq}+\rho^q\Psi_{kiq})=0$. Interchanging also $i$ and $k$ in the last equation this implies $\rho^q\Psi_{qij}=0$ and $\rho^q\Psi_{ijq}=0$, but then we obtain $\Psi_{ijk}=0$ for all $i,j,k$ by \eqref{W-III-a}. Hence all boost weight -1 components of $W$ vanish, which is a contradiction. We conclude that $\rho^k\rho_k= 0$ and hence $\rho_k=\rho_{kl^*}=0$ for all $k$ and any $l^*$, so $c$ is Kundt.   
\end{proof}

\begin{proposition}\label{prop:Weyl-N} Suppose that a symmetric double 2-form $W$ is of type N, and has boost order -2 along the null congruence $c$, $\bo{W}=-2$. If $\bo{\nabla\nabla W}\leq 0$ then $c$ is geodetic. If moreover $\bo{\nabla\nabla\nabla W}\leq 0$ then $c$ is Kundt.
\end{proposition}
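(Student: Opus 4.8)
The plan is to combine the type~N boost-order bookkeeping of Proposition~\ref{prop:Ric-N} with the double 2-form contraction technique of Proposition~\ref{prop:Weyl-III}. Since $\bo{W}=-2$, the only non-vanishing components of $W$ are those of boost weight $-2$, which by the pair symmetry $W_{abcd}=W_{cdab}$ reduce to the symmetric array $\Omega_{ij}\equiv W_{1i1j}=W_{1j1i}$ (not all zero); these play the role that $S_{11}$ played in Proposition~\ref{prop:Ric-N}. As in the rank~2 type~N case, $\bo{W}=-2$ already forces $\bo{\nabla W}\leq 0$, and once $c$ is shown to be geodetic this sharpens to $\bo{\nabla W}\leq -1$.

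For the geodetic statement I would assume $\bo{\nabla\nabla W}\leq 0$ and single out the boost weight $+2$ components of $\nabla\nabla W$ that must therefore vanish (the analogues of $\nabla_0\nabla_0 S_{ij}=0$ used in \eqref{eq-N-geod}). Applying Theorem~\ref{FactorizationTheorem} first to $T=\nabla W$ with $\beta=0$, and then to $T=W$ for each resulting term --- killing the higher-weight pieces with \eqref{nablaT bw geq s+2} and expanding the rest with \eqref{nablaT bw s+1} --- produces a system of homogeneous relations quadratic in $\kappa_i$ whose coefficients are built from $\Omega_{ij}$. Unlike the rank~2 case, where a single scalar $S_{11}\neq 0$ could be divided out, here $\Omega$ is merely a nonzero array, so I would follow the contradiction strategy of Proposition~\ref{prop:Weyl-III}: assume $\kappa^i\kappa_i\neq 0$ and contract the relations with factors of $\kappa$ to force every $\Omega_{ij}$ to vanish, contradicting $\bo{W}=-2$. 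Hence $\kappa_i=0$ and $c$ is geodetic.

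For the Kundt statement I would additionally assume $\bo{\nabla\nabla\nabla W}\leq 0$ and exploit that geodeticity already gives $\bo{\nabla W}\leq -1$. Selecting the vanishing boost weight $+1$ components of $\nabla\nabla\nabla W$ (built on weight $+1$ components of $W$ such as $W_{ijk0}$ and $W_{i010}$, as in the type~III proof), I apply Theorem~\ref{FactorizationTheorem} successively to $T=\nabla\nabla W$, then $T=\nabla W$, then $T=W$, each step reducing the boost weight by one and introducing a factor $\rho_{ij}$. As in \eqref{eq-N-kundt}, the generalized Ricci identity applied to $W$ and to $\nabla W$ will symmetrize the resulting relations in the free spatial derivative indices, which is exactly what makes the subsequent contractions effective. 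These relations are homogeneous and cubic in $\rho_{ij}$ with coefficients $\Omega_{ij}$; assuming $c$ is not Kundt, I would contract with suitable factors of $\rho$ (as with $\rho^{il}$ and on $(j,k)$ in Proposition~\ref{prop:Ric-N}, combined with the index-interchange trick of Proposition~\ref{prop:Weyl-III}) to obtain a sum of manifestly non-negative terms equated to zero, forcing $\rho_{ij}=0$.

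The main obstacle is the combinatorial bookkeeping of the double 2-form index structure: unlike the rank~2 case, where one contraction directly isolated $\kappa_i$ or $\rho_{ij}$, the vanishing weight relations here mix several independent entries of $\Omega_{ij}$ together with the $\rho$-quadratics and $\rho$-cubics, so the contractions must be arranged --- and, where necessary, combined after index interchanges --- to cleanly separate a positive-definite expression. Verifying that the correct linear combination of the derived identities yields a genuine sum of squares, rather than an indefinite quantity, is the crux, and is precisely where the symmetries guaranteed by the generalized Ricci identity must be used.
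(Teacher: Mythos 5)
Your overall strategy coincides with the paper's: the components $\Omega_{ij}=W_{1i1j}$ are exactly the paper's $\Psi_{ij}$, the geodetic step does come from the vanishing boost-weight $+2$ components of $\nabla\nabla W$ (the paper uses $\nabla_0\nabla_0W_{0i1j}=0$), two applications of theorem \ref{FactorizationTheorem} give the quadratic relation $0=2\kappa_i\kappa^k\Psi_{kj}-\kappa^k\kappa_k\Psi_{ij}$, and contracting with $\kappa^i$ forces $\kappa^k\Psi_{kj}=0$ and then $\kappa^k\kappa_k=0$, exactly as you anticipate. The Kundt step likewise starts from $\nabla\nabla\nabla W_{ijk0}=\nabla\nabla\nabla W_{i010}=0$ and three successive applications of theorem \ref{FactorizationTheorem}, yielding relations cubic in $\rho$ with coefficients $\Psi_{ij}$. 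Up to here your plan is sound.

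The gap is in the step you yourself flag as the crux. Your proposed closing mechanism --- contracting the general cubic relations with $\rho^{il}$ and over $(j,k)$ to produce a manifestly non-negative sum of squares, as in proposition \ref{prop:Ric-N} --- is not what the paper does, and it is not clear it can be made to work: the unspecialized relations mix the matrix $\rho_{ij}$ with the indefinite array $\Psi_{ij}$ in a way that does not obviously organize into a positive-definite expression. The device that actually closes the argument is the specialization already used in proposition \ref{prop:Weyl-III}: fix an arbitrary index $l^*$, set \emph{all three} derivative slots equal to $l^*$, and write $\rho_i\equiv\rho_{il^*}$, so that only a single spatial vector $\rho_i$ appears. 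The two resulting relations are
\begin{align*}
\rho^q\rho_q(\rho_i\Psi_{jk}-\rho_j\Psi_{ik})&=2\rho_k\rho^q(\rho_i\Psi_{jq}-\rho_j\Psi_{iq}),\\
\rho^q\rho_q\,\rho^k\Psi_{ik}&=2\rho_i(\rho^j\rho^k\Psi_{jk}),
\end{align*}
and the assumption $\rho^q\rho_q\neq0$ is refuted not by positivity but by a contradiction with $\bo{W}=-2$: contracting the first with $\rho^j\rho^k$ and comparing with the second gives $\rho^k\Psi_{ik}=0$, after which contracting the first with $\rho^i$ forces $\Psi_{jk}=0$ for all $j,k$. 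Positivity enters only at the very end, in concluding $\rho_{kl^*}=0$ from $\rho^k\rho_k=0$ for each $l^*$. Without this specialization (or an equivalent device) your proposal does not yet constitute a proof of the Kundt statement.
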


\begin{proof} For convenience we write $\Psi_{ij}\equiv W_{1i1j}=\Psi_{ji}$. Since $\bo{W}=-2$ these components do not all vanish, and $\bo{\nabla W}\leq 0$. Suppose that $\bo{\nabla\nabla W}\leq 0$. In particular, $\nabla_0\nabla_0 W_{0i1j}=0$, and by applying theorem  \ref{FactorizationTheorem} successively to $T=\nabla W$ and $T=W$ we infer that
\begin{equation}\label{W-N-geod}
0=\nabla_0(\nabla W)_{00i1j}=2\kappa_i\kappa^k\Psi_{kj}-\kappa^k\kappa_k\Psi_{ij}.
\end{equation}
Contracting \eqref{W-N-geod} with $\kappa^i$ gives $0=(\kappa^i\kappa_i)\kappa^k\Psi_{kj}$, implying $\kappa^k\Psi_{kj}=0$, and then we obtain $\kappa^k\kappa_k=0$ from \eqref{W-N-geod} itself. Hence $\kappa_k=0$ for all $k$, so $c$ is geodetic.
	
Suppose $\bo{\nabla\nabla\nabla W}\leq 0$ on top of $\bo{\nabla\nabla W}\leq 0$. As before, take an arbitrary but fixed index $l^*\in\{2,\ldots,n-1\}$, put $l$, $m$ and $n$ equal to $l^*$, and write $\rho_i\equiv\rho_{il^*}$ for convenience. In particular we have $\nabla_{l^*}\nabla_{l^*}\nabla_{l^*}W_{ijk0}=\nabla_{l^*}\nabla_{l^*}\nabla_{l^*}W_{i010}=0$. By successive application of theorem 2 to $T=\nabla\nabla W$, $T=\nabla W$ and $T=W$ these conditions become
\begin{align}
\label{W-N-kundt-a}\rho^q\rho_q(\rho_i\Psi_{jk}-\rho_j\Psi_{ik})&=2\rho_k\rho^q(\rho_i\Psi_{jq}-\rho_j\Psi_{iq}),\\
\label{W-N-kundt-b}\rho^q\rho_q\rho^k\Psi_{ik}&=2\rho_i(\rho^j\rho^k\Psi_{jk})
\end{align}
Assume $\rho^k\rho_k\neq 0$. Contracting \eqref{W-N-kundt-a} with $\rho^j\rho^k$ implies $\rho^q\rho_q\rho_k\Psi_{ik}=\rho_i\rho^q\rho^r\Psi_{qr}$, and thus $\rho_k\Psi_{ik}=0$ by comparison with \eqref{W-N-kundt-b}; however, contracting now \eqref{W-N-kundt-a} with $\rho^i$ leads to $\Psi_{jk}=0$ for all $j$ and $k$, which is a contradiction. Hence $\rho_k=\rho_{kl^*}=0$ for all $k$ and any $l^*$, which proves that $c$ is Kundt.
\end{proof}

\section{Proof of the Kundt theorem in dimension 3}\label{sec:Kundt3}
Here we give a proof of the null-alignment Kundt theorem in dimension 3 using the notation and results of previous sections. In our current setting we see that $0\leq d_{1}^c(p)\leq1,$ for all $p\in M,$ and any $c$, and the information of the Riemann tensor is contained in the Ricci scalar and the trace-free Ricci tensor $S$. 
\begin{theorem}
	Suppose that $(M,g,c)$ is a three-dimensional Lorentzian manifold with null congruence $c$ such that the tensors $\nabla^{m}S$ are of aligned type II or more special  
	for $m\leq 3,$ and let $\Gamma\subset M$ be the set of points for which $S$ is non-zero. Then $c$ has the Kundt property on the closure $\bar{\Gamma}.$
\end{theorem}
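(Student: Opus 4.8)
The plan is to reduce the statement to a pointwise assertion on the open set $\Gamma=\{S\neq 0\}$ and then use continuity to reach $\bar\Gamma$. The decisive structural fact is that the Kundt condition is the vanishing of the smooth components $\kappa_i=(\nabla_0k)_i$ and $\rho_{ij}=(\nabla_jk)_i$ (equivalently of the Kundt invariant $k_{[a}\nabla_{b]}k_{[c}k_{d]}$), so the set $\mathcal{K}$ of points at which $c$ is Kundt is \emph{closed}. Hence it suffices to show $\Gamma\subseteq\mathcal{K}$: since $\Gamma$ is dense in $\bar\Gamma$, this forces $\bar\Gamma\subseteq\overline{\mathcal{K}}=\mathcal{K}$.

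On $\Gamma$ we have $S\neq0$ together with $\bo{S}\leq0$, so $\bo{S}\in\{0,-1,-2\}$ at each point. First I would record the three-dimensional simplification that makes type II automatically \emph{generic}: trace-freeness reads $g^{ab}S_{ab}=2S_{01}+S_{22}=0$, so the single spatial component satisfies $S_{22}=-2\lambda$ with $\lambda=S_{01}$, and the $1\times1$ block $[S_{ij}]=[-2\lambda]$ has $\lambda$ as an eigenvalue only when $\lambda=0$. Thus $\bo{S}=0$ (i.e. $\lambda\neq0$) already places us in the hypothesis of theorem \ref{Thm:Ricci-generic}; concretely \eqref{map-S} gives $\langle S|k|km_2\rangle_2=\lambda-S_{22}=3\lambda\neq0$, so $d_1^c=n-2=1$ and corollary \ref{TopDim} applies. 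Therefore $c$ is Kundt on the open set $O_0:=\{\bo{S}=0\}$ by theorem \ref{Thm:Ricci-generic}, which here needs only $\bo{\nabla S}\leq0$.

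The remaining work concerns the points where $\bo{S}<0$, and here the main obstacle is that the null alignment type of $S$ is \emph{not} assumed constant and may jump across $\Gamma$; propositions \ref{prop:Ric-III} and \ref{prop:Ric-N} (via theorem \ref{FactorizationTheorem}) require a fixed boost order on a full neighborhood and so cannot be invoked at a point where the type drops. I would circumvent this by peeling off closures. On the open set $M\setminus\overline{O_0}\subseteq\{\bo{S}\leq-1\}$ the locus $O_{-1}:=\{\bo{S}=-1\}\cap(M\setminus\overline{O_0})=\{S_{12}\neq0\}\cap(M\setminus\overline{O_0})$ is open, so $\bo{S}=-1$ holds on a neighborhood of each of its points, and proposition \ref{prop:Ric-III} gives Kundt there, using $\bo{\nabla S}\leq0$ and $\bo{\nabla\nabla S}\leq0$ (the $m\leq2$ hypotheses). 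Peeling off $\overline{O_{-1}}$ as well, on the open set $M\setminus(\overline{O_0}\cup\overline{O_{-1}})\subseteq\{\bo{S}\leq-2\}$ the locus $O_{-2}:=\{\bo{S}=-2\}=\{S_{11}\neq0\}$ is again open, and proposition \ref{prop:Ric-N} delivers Kundt; this is the only place the third covariant derivative enters, since that proposition needs $\bo{\nabla\nabla\nabla S}\leq0$, which is exactly why the hypothesis is stated up to $m=3$.

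It then remains to check that the three open Kundt strata cover $\Gamma$ up to closure. If $p\in\Gamma$ with $\bo{S}(p)=0$ then $p\in O_0$; if $\bo{S}(p)=-1$ then $p\in\overline{O_0}$ or $p\in O_{-1}$; if $\bo{S}(p)=-2$ then $p\in\overline{O_0}\cup\overline{O_{-1}}$ or $p\in O_{-2}$. Hence $\Gamma\subseteq\overline{O_0\cup O_{-1}\cup O_{-2}}$. Since $O_0\cup O_{-1}\cup O_{-2}\subseteq\mathcal{K}$ and $\mathcal{K}$ is closed, we obtain $\Gamma\subseteq\mathcal{K}$ and therefore $\bar\Gamma\subseteq\mathcal{K}$, as required. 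The genuinely delicate type-jump boundary points are never treated by a direct computation; they are absorbed uniformly by the closedness of the Kundt set.
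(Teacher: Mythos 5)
Your proof is correct and rests on exactly the same ingredients as the paper's own argument: the genericity mechanism of theorem \ref{Thm:Ricci-generic}/corollary \ref{TopDim} for $\bo{S}=0$ (with your explicit remark that $[S_{ij}]=[-2\lambda]$ makes genericity automatic in dimension 3, which the paper leaves implicit), propositions \ref{prop:Ric-III} and \ref{prop:Ric-N} for boost orders $-1$ and $-2$, lower semi-continuity of the boost order, and closedness of the Kundt locus. It is simply the direct, stratify-and-take-closures reorganization of the paper's contrapositive argument, which instead assumes $c$ is non-Kundt on a neighborhood of $p$ and successively excludes boost orders $0,-1,-2$ there to conclude $S=0$ near $p$; there is no substantive difference.
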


\begin{proof}
	Suppose that $c$ does not have the Kundt property at a point $p.$ By continuity we can find a neighborhood $U$ of $p$ on which this holds. Corollary \ref{TopDim} implies that $\langle S\vert k \vert Q\rangle=0,$ for all rank 2 tensors $Q$ with $\bo{Q}=-1$;
	by \eqref{map-S} and the trace-free property of $S$ it follows that $\bo{S}\leq -1$ on $U$. 
	If each neighborhood of $p$ contains a point for which $\bo{S}=-1$
	then lower semi-continuity of boost order together with proposition \ref{prop:Ric-III} implies that $c$ is Kundt, which gives a contradiction. Therefore $\bo{S}\leq -2$
	on some neighborhood $V\subset U$ of $p$. By the same procedure and proposition \ref{prop:Ric-N} one shows that $p$ cannot be approximated by points for which $\bo{S}=-2$, 
	and hence $S$ vanishes on some neighborhood of $p$, showing that $p\notin \bar{\Gamma}.$
\end{proof}

\section{Proof of the Kundt theorem in dimension 4}\label{sec:Kundt4}

In this section we provide a new proof of the Kundt theorem in dimension 4. First we present a broader result on the partial map $\langle W\vert k \vert \cdot\rangle$ for Weyl-like tensors $W$. This entails theorem \ref{NonConformallyFlatKundtChar} and corollary \ref{cor:WeylII-D}, which covers Weyl-Petrov types II and D. In theorem \ref{ConformallyFlatTachyonic} we focus on conformally flat spacetimes for which the trace-free Ricci tensor is of type II or D w.r.t.~$c$. For the proof we again make extensive use of theorem \ref{FactorizationTheorem}; the introduced technique may well apply to more general tensors in later investigations on the Kundt conjecture in higher dimensions. However, a streamlined proof specific to dimension 4 is also given in remark \ref{rem:4Dproof}, which may be directly compared with the more tedious calculations used in \cite{CharBySpi}. The culmination point is theorem \ref{thm:culmination4d}, which assembles the relevant results of the present and previous sections.

\begin{proposition}\label{BilWeylOnto}
	Let $(V,g,c)$ be a four-dimenional Lorentzian vector space with a null-line $c$ and $W$ a non-zero Weyl-like tensor, i.e., $W$ satisfies
	
	(i) 
	$W_{abcd}=W_{[ab][cd]}=W_{cdab}$;
	
	(ii) $W_{abcd}+W_{acdb}+W_{adbc}=0$;
	
	(iii) $W_{abc}{}^b=0$.\\
	If $c$ is aligned with  $W$ and $\bo{W}=s$ ($-2\leq s\leq 1$) 
	then, for any $k\in c$, the map
	\begin{equation}\label{map-Weyl}
	\langle W\vert k \vert \cdot\rangle : \mathcal{B}_{4}^{-s-1}\rightarrow c^{\perp}/c,
	\end{equation}
	is surjective. 
\end{proposition}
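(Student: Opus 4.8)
The plan is to use that in dimension four $\dim(c^\perp/c)=n-2=2$, so that surjectivity of $\langle W\vert k\vert\cdot\rangle$ is equivalent to its image being two-dimensional. First I would note, by property \ref{prop-zero} and the frame expansion \eqref{map-expansion}, that the map is supported on the basis tensors $Q=e_{\alpha_1}\cdots e_{\alpha_4}$ with $\bw{\aa}=s+1$, and that on these its value is a linear combination of the boost order $s$ (leading) frame components of $W$. Since $\bo{W}=s$ these leading components do not all vanish. A useful bookkeeping remark is that, in four dimensions and for every admissible $s$, the boost weight $s$ part of a Weyl-like tensor carries exactly $n-2=2$ real degrees of freedom: a symmetric trace-free transverse $2$-tensor $\Psi_{ij}=W_{1i1j}$ for $s=-2$, a transverse vector for $s=\pm1$, and a single complex scalar for $s=0$. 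This matches the dimension of the target and makes surjectivity the expected outcome; the content of the proof is to verify that no degeneracy occurs.

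I would run the core argument by contradiction. If the image is a proper subspace of $c^\perp/c$ it lies in a hyperplane, so there is a nonzero transverse covector $\xi=\xi^j m_j$ with $\xi_a\langle W\vert k\vert Q\rangle^a=0$ for every $Q\in\B{4}{-s-1}$. Feeding the basis tensors with $\bw{\aa}=s+1$ into \eqref{map-expansion} turns this single condition into a linear system on the leading components of $W$. The goal is then to show, using the Weyl symmetries (i), the first Bianchi identity (ii) and the trace-free condition (iii), that this system forces every boost weight $s$ component to vanish, contradicting $\bo{W}=s$.

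The type N and type III cases serve as a template and come out essentially immediately. For $s=-2$, with $\Psi_{ij}=W_{1i1j}$ symmetric and trace-free, the choice $Q=e_p e_i e_1 e_j$ gives
\begin{equation*}
\langle W\vert k\vert Q\rangle=\Psi_{ij}\,\pi(m_p)-\Psi_{pj}\,\pi(m_i),
\end{equation*}
and two suitable index choices yield a pair of image vectors of determinant $-(\Psi_{22}^2+\Psi_{23}^2)\neq0$, so the image is two-dimensional. For $s=-1$, writing $\Psi_i=W_{101i}$ and $W_{1ijk}=\epsilon_{jk}\phi_i$ (with $\epsilon$ the transverse alternating symbol, $\epsilon_{23}=1$), the trace-free condition forces $\Psi_i=\epsilon_i{}^j\phi_j$, and the analogous tensors $Q=e_p e_0 e_1 e_i$ give
\begin{equation*}
\langle W\vert k\vert Q\rangle=\Psi_i\,\pi(m_p)-\phi_i\,\epsilon_p{}^j\pi(m_j),
\end{equation*}
whose two values, as $p$ ranges over the transverse indices, have determinant $\Psi_i^2+\phi_i^2$; this is nonzero because $\bo{W}=-1$ forces the leading part to be nonzero. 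In both cases the decisive quantity is a Gram-type sum of squares of leading components, strictly positive precisely because $\bo{W}=s$.

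The hard part will be the boost weight zero case ($s=0$, Weyl types II and D), and to a lesser extent the type I case ($s=1$). There the leading components $W_{0101}$, $W_{0i1j}$ and $W_{ijkl}$ amount to several raw frame quantities tied together by a web of trace-free and Bianchi relations, collapsing to the two real degrees of freedom of the boost weight zero scalar; tracking these identities carefully to confirm that the image is genuinely two-dimensional, rather than accidentally degenerate, is where the bulk of the calculation sits. The type I case is handled in the same spirit with the boost weight $+1$ vector in place of the boost weight zero scalar. Once nondegeneracy is established in these two cases, surjectivity for all admissible $s$ follows.
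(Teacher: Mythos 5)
Your strategy is the right one and, for the cases you actually carry out, correct: for $s=-2$ and $s=-1$ your explicit choices of $Q$ and the resulting Gram determinants $\Psi_{22}^2+\Psi_{23}^2$ and $\Psi_i^2+\phi_i^2$ do establish surjectivity, and this is in spirit identical to the paper's proof, which also just exhibits specific monomials $Q_s$ whose images span $c^\perp/c$. The problem is that you stop there. The cases $s=0$ (Weyl--Petrov types II and D) and $s=1$ are only described as ``the hard part'' where ``the bulk of the calculation sits''; no computation is given, and no argument is offered for why the web of trace-free and Bianchi relations among $W_{0101}$, $W_{01ij}$, $W_{0i1j}$, $W_{ijkl}$ could not conspire to make the image degenerate. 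Since the $s=0$ case is exactly the one needed for corollary \ref{cor:WeylII-D} and for the proof of theorem \ref{thm:culmination4d}, this is a genuine gap rather than a routine omission: the proposal proves the proposition only for types III and N.

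It is worth noting how the paper sidesteps the difficulty you flag. Passing to a complex null frame $(k,l,m,\overline m)$ and the Newman--Penrose scalars $\Psi_0,\dots,\Psi_4$, the identities (i)--(iii) package each boost weight level into the single complex scalar $\Psi_{2-s}$ (this is the rigorous form of your ``two real degrees of freedom per level'' bookkeeping remark). One then picks a single $Q_s$ per level --- $Q_{-2}=l\overline{\mm}\,\mm\overline{\mm}$, $Q_{-1}=k\mm l\mm$, $Q_0=k\mm\mm\overline{\mm}$, $Q_1=k\mm k\mm$ --- and a short computation using the frame form of the trace-free condition gives uniformly $\langle W|k|Q_s\rangle=(s+3)\Psi_{2-s}\,\pi(\mm)$, with the conjugate relation for $\overline{Q_s}$; surjectivity for every $s$, including $s=0$ and $s=1$, is then immediate from $\Psi_{2-s}\neq0$. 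If you want to stay in a real frame, you must actually perform the $s=0$ and $s=1$ reductions; otherwise the complexification is the efficient way to close your argument.
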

\begin{proof}
	Take any $k\in c$. For convenience we complete this vector to a complex null frame $(e_0,e_1,e_{\mm},e_{\mmc})=(k,l,m,\overline{m})$, where the complex conjugate null vectors are associated to the spatial vectors $e_2$ and $e_3$ of a real null frame by $m=\frac{1}{\sqrt{2}}(e_2-ie_3)$ and $\overline{m}=\frac{1}{\sqrt{2}}(e_2+ie_3)$. Referring to Newman-Penrose notation we define the  complex scalars $\Psi_{i}$ ($0\leq i\leq 4$) by
	\begin{equation}
	\begin{aligned}
	&\Psi_0=W_{abcd}k^am^bk^cm^d,\quad \Psi_1=W_{abcd}k^al^bk^cm^d,\quad \Psi_2=W_{abcd}k^am^bl^c\overline{m}^d,\\
	&\Psi_3=W_{abcd}l^ak^b\overline{m}^c l^d,\quad \Psi_4=W_{abcd}l^a\overline{m}^bl^c\overline{m}^d.
	\end{aligned}
	\end{equation}
	Since $c$ is a principal null direction of $W$ we have $\Psi_0=0$. It is well known that, due to the properties (i)-(iii), the frame components of $W$ of boost weight $m$ are real linear combinations of $\Psi_{2-m}$ and $\overline{\Psi_{2-m}}$; hence,
	$$
	\bo{W}=s\;\;(-2\leq s\leq 1)\quad \Leftrightarrow\quad \Psi_{i}=0\;\;\textrm{for}\;\;0\leq i< 2-s\;\;\textrm{and}\;\;\Psi_{2-s}\neq 0.
	$$
	For $-1\leq s\leq 2$ we define $Q_s\in{\cal B}_4^{-s-1}$ by
	\begin{equation}
	Q_{-2}=l\overline{m}m\overline{m},\quad Q_{-1}=kmlm,\quad Q_{0}=kmm\overline{m},\quad Q_1=kmkm.
	\end{equation}
	Applying \eqref{map-expansion} to $T=W$ and $Q=Q_s$, and using the properties (i)-(iii) of the Weyl tensor, in particular the frame expansion of the tracefree property (iii) reading
	\begin{equation}\label{tracefree-frame}
	W_{\alpha 0\beta1}+W_{\alpha 1\beta0}+W_{\alpha \mm\beta\mmc}+W_{\alpha \mmc\beta \mm}=0
	\end{equation}
	for any $\alpha,\beta\in\{0,1,\mm,\mmc\}$, one readily finds\footnote{
		For $s=-2$, $s=-1$, $s=0$, and $s=1$ one uses $W_{1\mm 1\mmc}=0$, $-W_{1\mmc\mm\mmc}=W_{10\mmc1}$, $-W_{\mmc \mm\mm\mmc}=W_{0\mm 1\mmc}+W_{0\mmc 1\mm}$ and $W_{0\mm\mm\mmc}=W_{010\mm}$, which are obtained by (i) and applying \eqref{tracefree-frame} to $\alpha=\beta=1$, $\alpha=1$ and $\beta=\mmc$, $\alpha=\mmc$ and $\beta=\mm$, and $\alpha=0$ and $\beta=\mm$, respectively; for $s=0$ one also uses $W_{01\mm\mmc}=W_{0\mm 1\mmc}-W_{0\mmc 1\mm}$, which follows from (i) and (ii).}
	$$
	\langle W|k|Q_s\rangle = (s+3)\Psi_{2-s}\,\pi(m).
	$$
	Together with the complex conjugates $\langle W|k|\overline{Q_s}\rangle = (s+3)\overline{\Psi_{2-s}}\,\pi(\overline{m})$ this shows that the map \eqref{map-Weyl} is surjective for each $s$. 
\end{proof}

We call a null congruence {\em Robinson-Trautman} if it is non-twisting (and therefore geodetic) and non-shearing but expanding; if the congruence is generated by a non-vanishing vector field $k$ then it is Robinson-Trautman iff.~$0\neq \pi(X^a\nabla_a k^b)\propto \pi(X)^b$ for all vector fields $X\in c^{\perp}$. Recall that a null congruence generated by $k$ is Kundt if it is non-twisting, non-shearing and non-expanding, i.e., iff.~$\pi(X^a\nabla_a k^b)=0$ for all $X\in c^{\perp}$. From theorem \ref{FactorizationTheorem} 
and proposition \ref{BilWeylOnto} we immediately infer: 

\begin{theorem}\label{NonConformallyFlatKundtChar}
	Suppose that $(M,g,c)$ is a four-dimensional, non-conformally flat Lorentzian manifold with a null-congruence $c$ such that a Weyl-like tensor $W$ has constant boost order $s\leq1$ along $c$. Letting $k$ be a non-vanishing vector field generating the null-congruence, then $c$ is Kundt $\{$Robinson-Trautman$\}$ iff.~$\{\exists\, \alpha\in C^{\infty}(M)\, :\}$
	\begin{equation}
	X^{a}(\nabla_{a}W_{bcde})Q^{bcde}=0\, \{=\alpha \pi (X)^{a}\langle W\vert k\vert Q\rangle_{a}\},
	\end{equation}
	for all $X\in c^{\perp}$ and $Q\in \B{4}{-s-1}(M)$.
\end{theorem}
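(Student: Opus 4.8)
The plan is to read off the claim as a direct consequence of the factorization identity \eqref{key-b} together with the surjectivity established in proposition \ref{BilWeylOnto}. First I would observe that non-conformal flatness gives $W\neq 0$, so $\bo{W}=s$ is well defined with $-2\leq s\leq 1$; the upper bound is exactly the standing hypothesis and expresses that $c$ is aligned with $W$ (the boost weight $+2$ component $\Psi_0$ vanishes). Hence proposition \ref{BilWeylOnto} applies at every point $p\in M$, and the partial map $\langle W\vert k\vert\cdot\rangle_p:(\B{4}{-s-1})_p\to c_p^\perp/c_p$ is surjective. Since every element of $(\B{4}{-s-1})_p$ is the value at $p$ of some local section, this surjectivity persists when $Q$ is restricted to range over sections in $\B{4}{-s-1}(M)$.

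Next I would specialise theorem \ref{FactorizationTheorem} to $T=W$ (rank $r=4$, $\bo{W}=s$): for every $X\in c^\perp$ and $Q\in\B{4}{-s-1}(M)$ the note \eqref{key-b} gives
$$
X^a(\nabla_a W_{bcde})Q^{bcde}=\pi(X^a\nabla_a k^b)\,\langle W\vert k\vert Q\rangle_b,
$$
where both $\pi(X^a\nabla_a k^b)$ and $\langle W\vert k\vert Q\rangle_b$ live in the screen space $c^\perp/c$ and are paired through its induced (positive definite) metric.

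For the Kundt equivalence, the direction $(\Rightarrow)$ is immediate: if $c$ is Kundt then $\pi(X^a\nabla_a k^b)=0$ for all $X\in c^\perp$, so the right-hand side vanishes. For $(\Leftarrow)$, suppose the left-hand side vanishes for all $Q$; then at each $p$ the screen vector $\pi(X^a\nabla_a k^b)$ is orthogonal to $\langle W\vert k\vert Q\rangle$ for every $Q$, hence orthogonal to all of $c_p^\perp/c_p$ by surjectivity. Because the induced metric on $c^\perp/c$ is positive definite, a screen vector orthogonal to the entire screen space must vanish, so $\pi(X^a\nabla_a k^b)=0$ for all $X\in c^\perp$, i.e.\ $c$ is Kundt. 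The Robinson-Trautman equivalence follows by the same argument after replacing $\pi(X^a\nabla_a k^b)$ with $\pi(X^a\nabla_a k^b)-\alpha\,\pi(X)^b$: the displayed identity converts the condition $X^a(\nabla_a W_{bcde})Q^{bcde}=\alpha\,\pi(X)^a\langle W\vert k\vert Q\rangle_a$ (for all $Q$) into $[\pi(X^a\nabla_a k^b)-\alpha\,\pi(X)^b]\langle W\vert k\vert Q\rangle_b=0$, which by surjectivity and positive definiteness is equivalent to $\pi(X^a\nabla_a k^b)=\alpha\,\pi(X)^b$, the defining relation (for $\alpha\neq 0$) of a non-twisting, non-shearing, expanding congruence.

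I do not expect a genuine obstacle: given the two cited results the argument is purely formal. The one point demanding care is the linear-algebra step, namely that orthogonality to the image of $\langle W\vert k\vert\cdot\rangle$ forces the optical vector $\pi(X^a\nabla_a k^b)$ to vanish; this rests jointly on the positive definiteness of the screen metric and the surjectivity of proposition \ref{BilWeylOnto}. A secondary bookkeeping matter is confirming pointwise that the hypotheses of that proposition (alignment of $c$ with $W$ and the range $-2\le s\le 1$) hold, which is precisely what non-conformal flatness and the assumption $s\le 1$ supply.
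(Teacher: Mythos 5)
Your proposal is correct and takes essentially the same route as the paper, which states that the theorem follows immediately from theorem~\ref{FactorizationTheorem} and proposition~\ref{BilWeylOnto} without writing out the details. Your write-up merely makes explicit the linear-algebra step (surjectivity of $\langle W\vert k\vert\cdot\rangle$ plus positive definiteness of the screen metric on $c^{\perp}/c$) that the paper leaves implicit.
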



\begin{corollary}\label{cor:WeylII-D} Suppose that a Weyl-like tensor $W$ defined on a spacetime $(M,g)$ is of Petrov type II or D everywhere, with constant boost order $s=0$ along the multiple principal null direction $c$ ($\bo{W}=0$). If moreover $\bo{\nabla W}\leq 0$ then $c$ is Kundt.
\end{corollary}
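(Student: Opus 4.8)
The plan is to reduce the statement to the Kundt characterisation of Theorem~\ref{NonConformallyFlatKundtChar} and then verify its defining condition by a short boost-order computation. Since a type II or D tensor is in particular non-zero, $(M,g)$ is non-conformally flat, and by hypothesis $\bo{W}=0$ is constant along the multiple principal null direction $c$. Thus $(M,g,c)$ meets all assumptions of Theorem~\ref{NonConformallyFlatKundtChar} with $s=0$, and also those of Proposition~\ref{BilWeylOnto}, so that the partial map $\langle W\vert k\vert\cdot\rangle:\B{4}{-1}\to c^\perp/c$ is surjective. Consequently $c$ is Kundt as soon as I can show
$$
X^a(\nabla_a W_{bcde})Q^{bcde}=0\qquad\text{for all }X\in c^\perp,\ Q\in\B{4}{-1}.
$$

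First I would record the elementary bookkeeping fact for full contractions: if $A$ and $B$ are covariant tensors of equal rank with $\bo{A}+\bo{B}<0$, then the complete contraction $A_{\mu_1\cdots\mu_r}B^{\mu_1\cdots\mu_r}$ vanishes. Indeed, writing $B^{\mu_1\cdots\mu_r}=g^{\mu_1\nu_1}\cdots B_{\nu_1\cdots\nu_r}$, each $g$-pairing of a contracted index pair contributes zero to the total boost weight, forcing any surviving term to satisfy $\mathrm{bw}(A\text{-comp})=-\mathrm{bw}(B\text{-comp})$; this is incompatible with the simultaneous requirements $\mathrm{bw}(A\text{-comp})\le\bo{A}$ and $\mathrm{bw}(B\text{-comp})\le\bo{B}$ precisely when $\bo{A}+\bo{B}<0$.

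Next I would apply this with $A=\nabla W$ and $B=X\otimes Q$. By hypothesis $\bo{\nabla W}\le 0$; since $X\in c^\perp=\B{1}{0}$ has boost order $\le 0$ and $Q\in\B{4}{-1}$ has boost order $\le -1$, the tensor product satisfies $\bo{X\otimes Q}\le\bo{X}+\bo{Q}\le -1$. Hence $\bo{\nabla W}+\bo{X\otimes Q}\le -1<0$, and the displayed contraction $X^a(\nabla_aW_{bcde})Q^{bcde}=(\nabla W)_{abcde}(X\otimes Q)^{abcde}$ vanishes for every admissible $X$ and $Q$. Feeding this into the equivalence of Theorem~\ref{NonConformallyFlatKundtChar} (equivalently, combining the factorisation of Theorem~\ref{FactorizationTheorem} with the surjectivity of $\langle W\vert k\vert\cdot\rangle$ from Proposition~\ref{BilWeylOnto} to conclude $\pi(X^a\nabla_ak^b)=0$ for all $X\in c^\perp$) yields that $c$ is Kundt, as claimed.

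I expect no serious obstacle, since the substantive content is already packaged in Theorem~\ref{NonConformallyFlatKundtChar} and Proposition~\ref{BilWeylOnto}. The one point requiring care is the boost-order accounting: one must be sure that, after raising the indices of $Q$ and contracting against $\nabla W$, the total boost order is strictly negative rather than merely $\le 0$. It is precisely the strict drop supplied by $\bo{Q}\le -1$ — as opposed to $X$ alone, which only contributes $\le 0$ — that makes the contraction vanish and lets the surjective bilinear map force the full Kundt condition $\pi(X^a\nabla_a k^b)=0$.
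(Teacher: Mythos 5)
Your proposal is correct and follows essentially the same route the paper intends: the corollary is an immediate consequence of Theorem~\ref{NonConformallyFlatKundtChar} (equivalently, Theorem~\ref{FactorizationTheorem} plus the surjectivity from Proposition~\ref{BilWeylOnto}), once one observes that $\bo{\nabla W}\leq 0$, $\bo{X}\leq 0$ and $\bo{Q}\leq -1$ force the full contraction $X^a(\nabla_aW_{bcde})Q^{bcde}$ to vanish. Your explicit boost-weight bookkeeping for that contraction is exactly the step the paper leaves implicit, and it is carried out correctly.
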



Let us state a convention that will be useful in expressing the theorems below. We shall say that a collection of tensors $\{T_{\lambda}\}_{\lambda\in I}$ is uniformly of type $D$ w.r.t. $c$ at a point $p\in M$ if there exists a null line $\tilde{c}_{p}\subset T_{p}M$ different from $c_{p}$ such that for each $\lambda\in I$, $(T_{\lambda})_{p}$ is either zero or algebraically special w.r.t. $c_{p}$ and $\tilde{c}_{p}$, and for some $\tau\in I,$ $(T_{\tau})_{p}\neq 0.$

\begin{theorem}\label{ConformallyFlatTachyonic}
	Let $(M,g,c)$ be a conformally flat Lorentzian manifold with a null line distribution $c$ such that
	\begin{enumerate}[a)]
		\item the trace-free Ricci tensor $S$ is everywhere non-zero and of boost-order zero w.r.t. $c$;
		\item $\nabla^{m}Rm$ is algebraically special w.r.t. $c$, for $0\leq m\leq 3,$
		\item $d_{3}^{c}\equiv 1$ on $M$,
	\end{enumerate}
	Then the following holds:
	\begin{enumerate}[i)]
		\item If $\{S,\nabla S\}$ is nowhere uniformly of type $D$ w.r.t. $c,$ then $c$ is Kundt on $M.$
		\item If $\{S,\nabla S\}$ is uniformly of type $D$ w.r.t. $c$ everywhere, then  for
		each point of $p\in M$ there exists a neighborhood $U$, an interval $I=(-\epsilon,\epsilon)$, a three-dimensional locally maximally symmetric Lorentzian manifold $(N,h)$ and a strictly positive smooth function $f:I\rightarrow \mathbb{R}$ such that $U$ is isometric to  $I\times N$ with the warped product metric
		\begin{equation}\label{warp}
		dx^{2} + fh,
		\end{equation}
		where $x$ is a coordinate on $I.$
	\end{enumerate}
\end{theorem}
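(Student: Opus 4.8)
The plan is to run both cases from a common algebraic starting point and then split. First I would fix $p\in M$, choose $k\in c_p$ and a null frame $\{k,l,m_2,m_3\}$ diagonalising the $2\times2$ block $[S_{ij}]$. Since the spacetime is conformally flat, all curvature data reside in $S$ and the Ricci scalar, so $S\in\mathcal{C}_{2,1}$ and hence $K_1^c\subseteq K_3^c$, giving $d_1^c\le d_3^c=1$ by hypothesis (c). By \eqref{map-S} and the argument in the proof of theorem \ref{Thm:Ricci-generic}, $d_1^c=n-2=2$ would follow if $\lambda\equiv S_{01}$ were not an eigenvalue of $[S_{ij}]$; as $d_1^c\le1<2$, $\lambda$ must be an eigenvalue, and since $\mathrm{tr}[S_{ij}]=-2\lambda$ by tracelessness the eigenvalues are $\{\lambda,-3\lambda\}$. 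If $\lambda=0$ both would vanish and the boost-weight $0$ part of $S$ would be zero, contradicting $\bo{S}=0$; hence $\lambda\neq0$ and the eigenvalues are distinct. Diagonalising so that $S_{22}=\lambda$, $S_{33}=-3\lambda$ gives $\langle S|k|km_i\rangle_j=\lambda\delta_{ij}-S_{ij}=\mathrm{diag}(0,4\lambda)$, so that $K_3^c=K_1^c=\Span\{\pi(m_3)\}$. Claim \ref{KundtOnK} then yields the partial Kundt relations $\kappa_3=0$ and $\rho_{3j}=0$ for all $j$, leaving only the components $\kappa_2,\rho_{2j}$ in the $\lambda$-eigendirection to be controlled. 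A short null-rotation computation about $k$ shows that $S$ is of type D, equivalently of Segre type $\{(1\,1\,1),1\}$ with $S=\lambda(g-4\,n\otimes n)$ for $n=m_3$, precisely when its boost-weight $-1$ and $-2$ parts $v_i\equiv S_{1i}$, $\mu\equiv S_{11}$ satisfy $v_2=0$ and $\mu=-v_3^2/(4\lambda)$.

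For part (i) I would derive relations for $\kappa_2,\rho_{2j}$ in the spirit of propositions \ref{prop:Ric-III} and \ref{prop:Ric-N}: apply theorem \ref{FactorizationTheorem} to $\nabla S$ and then to $S$, converting the vanishing of the top-boost-weight components of $\nabla^2 S$ (ensured by $\bo{\nabla^2 S}\le0$) into a quadratic identity in $\nabla k$ multiplied by the type-II data of $S$, and likewise for $\nabla^3 S$. Since $\kappa_3=\rho_{3j}=0$ are already known, these identities collapse to relations among $\kappa_2,\rho_{2j},v_i,\mu$. The key point is that failure of $\{S,\nabla S\}$ to be uniformly of type D means that in every aligned frame either $v_2\neq0$, or $\mu\neq-v_3^2/(4\lambda)$, or the analogous defect occurs for $\nabla S$; in each situation one of these coefficients supplies the non-degeneracy that, after contracting the derived identities with the appropriate data (the analogue of contracting with $v^i$ or with $\rho^{ij}$ in the earlier propositions), forces first $\kappa_2=0$ (geodetic) and then $\rho_{2j}=0$ (Kundt). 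I expect the main obstacle here to be the bookkeeping of which defect is active and the verification that in each sub-branch a genuinely non-degenerate contraction is available: unlike the type III/N propositions the surviving data is only one-dimensional (the $m_3$-direction is frozen), so several degenerate sub-branches must be excluded rather than eliminated by a single positive-definiteness argument.

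For part (ii), uniform type D together with $d_3^c\equiv1$ forces at every point the rigid form $S=\lambda(g-4\,n\otimes n)$ with $n$ the unit spacelike field spanning the frozen $(-3\lambda)$-eigenline $m_3$, i.e.\ the $\{(1\,1\,1),1\}$ Segre type with triple eigenvalue on the Lorentzian $3$-distribution $\mathcal{D}=n^{\perp}$. The plan is to realise $(M,g)$ as the warped product by establishing the three standard recognition conditions for a one-dimensional base: (a) the $n$-lines are geodesic; (b) $\mathcal{D}$ is integrable; (c) the leaves of $\mathcal{D}$ are totally umbilic with umbilicity factor constant along each leaf. I would extract (a)--(c) from the alignment conditions $\bo{\nabla^m S}\le0$ ($m\le3$) through theorem \ref{FactorizationTheorem}, supplemented by the Cotton identity $\nabla_{[a}P_{b]c}=0$ that conformal flatness ($n=4$, Weyl $\equiv0$) imposes on the Schouten tensor $P=\tfrac12 S+\tfrac{R}{24}\,g$: differentiating $S=\lambda(g-4\,n\otimes n)$ and substituting into this identity yields the evolution of $\lambda$ and of $\nabla n$ in precisely the form required by (a)--(c), in particular that $\lambda$, and hence the warping $f$, is constant along the leaves. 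A warped-product recognition theorem then gives the local isometry with $dx^2+f\,h$, and conformal flatness of the total space descends, via the block form of the Ricci and Cotton tensors, to force the fibre $(N,h)$ to be Einstein and conformally flat in dimension $3$, hence of constant curvature and so locally maximally symmetric. The main obstacle in this part is the genuinely geometric integration step, upgrading the pointwise algebraic type and the boost-weight/Cotton constraints into the neighbourhood-level integrability and umbilicity statements (a)--(c) and the recognition of the warped product, which goes beyond the purely algebraic applications of theorem \ref{FactorizationTheorem} used in the other sections.
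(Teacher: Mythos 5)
Your algebraic setup is correct and coincides with the paper's: condition c) together with \eqref{map-S} forces $\lambda=S_{01}$ to be an eigenvalue of $[S_{ij}]$ with eigenvalues $\{\lambda,-3\lambda\}$, $\lambda\neq0$, the one-dimensional $K_3^c$ is spanned by the image of the $-3\lambda$-eigendirection, claim \ref{KundtOnK} freezes the corresponding components of $\nabla k$, and your characterization of type D ($v_2=0$, $\mu=-v_3^2/(4\lambda)$, equivalently $S_{11}=S_{13}=0$ after the paper's null rotation normalizing $S_{12}=0$) is the correct one. Part ii) of your plan is essentially the paper's proof: one shows $\nabla n=\theta h$ and $\nabla_a\theta\propto n_a$, whence the orthogonal distribution is integrable and totally umbilic, Gauss--Codazzi plus the relation \eqref{endeq} makes the leaves Einstein (hence maximally symmetric in dimension 3), and the flow of $n$ produces the warped product; your appeal to the Cotton identity is equivalent to the paper's use of the contracted Bianchi identity \eqref{ConformalFlatBianchi}.

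The genuine gap is in part i), where you stop at a roadmap precisely at the point where the work lies. You never derive the identities linking the surviving Kundt data to the type-D defect coefficients, and you explicitly defer the ``bookkeeping of which defect is active'' --- but that bookkeeping is the proof. Two concrete problems: first, the decisive identities (the analogues of \eqref{S13Rel} and \eqref{S11Rel}, e.g.\ $3S_{13}\nabla_Xk_3=0$ and $\nabla_XS_{13}=S_{11}\nabla_Xk_3$ in the paper's labels) are \emph{neighborhood} identities, obtained after $S_{13}\equiv0$ has been established on an open set; this is why the paper argues by contradiction (assume $c$ is nowhere Kundt on a neighborhood, deduce that $S_{13}$, then $S_{11}$, then all negative boost-weight components of $\nabla S$ vanish there, so $\{S,\nabla S\}$ is uniformly of type D), rather than running your direct pointwise implication ``defect $\Rightarrow$ Kundt''. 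Second, and more importantly, the branch in which $S$ is pointwise of type D but the defect sits in $\nabla S$ cannot be closed by theorem \ref{FactorizationTheorem} alone: the paper needs the conformally flat Bianchi identity \eqref{ConformalFlatBianchi} (via $0=2\nabla_{[1}S_{2]2}=-\tfrac{1}{12}\nabla_1R$ and index permutations) to show that all negative boost-weight components of $\nabla S$ vanish once $S_{11}=S_{13}=0$. You invoke the Cotton/Bianchi identity only in part ii), so this branch of part i) is left unhandled in your proposal.
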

\begin{proof}
Let  $\{k,l,m_2,m_3\}$ be a null-frame on an open set $U$ such that $k$ is aligned with $c$ and $\langle\pi(m_2)\rangle=K_{3}^{c}$ at each point.
	Then,
	\begin{equation}
	0=\langle S\vert k \vert km_3\rangle_{3}=-S_{33}+S_{01},
	\end{equation}
	and
	\begin{equation}
	0=\langle S\vert k \vert km_2\rangle_{3}=-S_{23},
	\end{equation}
	and hence $S_{22}\neq S_{01}$ everywhere on $U$ since otherwise $S$ would not be of type $II$ or $D$. Performing a null-rotation of the form
	\begin{equation}
	k\mapsto k^{\prime}=k,\quad l\mapsto l^{\prime}=l -\frac{1}{2}z^{2}k + z m_{2},\quad m_{2}\mapsto m_{2}^{\prime}=m_{2}-z k,\quad m_3\mapsto m_{3}^{\prime}=m_{3},
	\end{equation}
	gives
	\begin{equation}
	S_{1^{\prime}2^{\prime}}=S_{12} -z (S_{01}-S_{22}).
	\end{equation}
	Letting $z = \frac{S_{12}}{S_{01}-S_{22}}$, we obtain a new frame $\{k,l,m_{2},m_{3}\}$ such that $\langle\pi(m_{2})\rangle=K_{3}^{c}$ and $S_{12}=0$ on $U.$ Any aditional null-rotation of the frame about $k$ which fixes $m_2$  will  leave the components $S_{13}$ and $S_{11}$ unchanged and therefore $S$ is of type $II$ iff. $(S_{11})^2+(S_{13})^2$ is everywhere non-zero and $S$ is of type $D$ iff. $S_{11}=S_{13}\equiv 0.$
	
	Using that $\langle \nabla S\vert k\vert \mathcal{B}_{3}^{-1}\rangle\subset \langle \pi(m_2)\rangle, $ we obtain the following list of identities: 
	\begin{align} 0=&\langle\nabla S \vert k \vert km_{3}m_{3}\rangle_{3}=-\nabla_{3}S_{33}+2\nabla_{0}S_{13},\label{DSRel1}\\
	0=&\langle\nabla S \vert k \vert m_{3}km_{3}\rangle_{3}=-\nabla_{3}S_{33}+\nabla_{1}S_{03}+\nabla_{3}S_{01},\\
	0=&\langle\nabla S \vert k \vert lkk\rangle_{3}=-2\nabla_{1}S_{03},\\
	0=&\langle\nabla S \vert k \vert kkl\rangle_{3}=-\nabla_{3}S_{01}-\nabla_{0}S_{13},\label{DSRel2}\\
	0 =& \langle \nabla S\vert k \vert km_2m_3\rangle_{3}=-\nabla_{3}S_{23} + \nabla_{0}S_{21},\label{DSRel323}\\
	0=& \langle \nabla S\vert k \vert m_2km_3\rangle_{3}= -\nabla_{2}S_{33} + \nabla_{2}S_{01},\label{DSRel233}\\
	0=& \langle \nabla S\vert k \vert m_3km_2\rangle_{3}=-\nabla_{3}S_{32} + \nabla_{1}S_{02},\label{DSRel332}\\
	0=&\langle \nabla S\vert k \vert km_2m_2\rangle_{3} =-\nabla_{3}S_{22},\label{DSRel322}\\
	0=&\langle \nabla S\vert k \vert m_2m_2k\rangle_{3} =-\nabla_{2}S_{23}.\label{DSRel223}
	\end{align}
	Using that $\nabla S$ is algebraically special and  relations \eqref{DSRel323}, \eqref{DSRel332} and \eqref{DSRel223}  we proceed to show that $\nabla m_2$ is algebraically special and satisfies \begin{equation}\label{m2Rel}
	\nabla_{2}(m_{2})_{3}=0,\quad\nabla_{1}(m_2)_{0}=\nabla_{0}(m_2)_{1}=\nabla_{3}(m_2)_{3}.\end{equation}
	\begin{align}
	0=&\nabla_{0}S_{20}=(-S_{10}+S_{22})\nabla_{0}(m_{2})_{0},\\
	0=&\nabla_{0}S_{23}=(-S_{33}+S_{22})\nabla_{0}(m_{2})_{3},\label{0m3}\\
	0=&\nabla_{2}S_{23}=(-S_{33}+S_{22})\nabla_{2}(m_{2})_{3},\\
	0=&-\nabla_{3}S_{23}+\nabla_{1}S_{02}=(S_{33}-S_{22})(\nabla_{3}(m_{2})_{3}-\nabla_{1}(m_{2})_{0}),\\
	0=&-\nabla_{3}S_{23}+\nabla_{0}S_{12}=(S_{33}-S_{22})(\nabla_{3}(m_{2})_{3}-\nabla_{0}(m_{2})_{1}),
	\end{align}
	where we used \eqref{0m3} in the last equation. This gives the stated properties of $m_2$.
	
	Combining eqs. \eqref{DSRel1}-\eqref{DSRel2}, shows that
	\begin{equation}\label{DSRel333}
	\nabla_3S_{33}=\nabla_{1}S_{03}=\nabla_{0}S_{13}=\nabla_{3}S_{01}=0.
	\end{equation}
	From eqs. \eqref{DSRel233} and \eqref{DSRel333}, we find that
	\begin{equation}\label{S13Rel}
	0\equiv\nabla_{X}S_{33}-\nabla_{X}S_{01}=X(S_{33}-S_{01}) -2S_{a3}\nabla_{X}m_{3}^{a}+S_{a1}\nabla_{X}k^{a}+S_{0a}\nabla_{X}l^{a} =3S_{13}\nabla_{X}k_{3},
	\end{equation}
	for all $X\in c^{\perp}.$ 
	
	Assume from now on that $S_{13}\equiv 0$ on $U.$ Then, 
	\begin{equation}
	\nabla_{1}S_{10}-\nabla_{1}S_{33}=l(S_{10}-S_{33}) - S_{10}\nabla_{1}l_{0}-S_{10}\nabla_{1}k_{1} + -2S_{33}\nabla_{1}(m_{3})_{3}=0.
	\end{equation}
	
	With the identities above, we derive several relations by using $\langle \nabla\nabla S\vert k\vert \mathcal{B}_{4}^{-1}\rangle\subset\langle\pi(m_2)\rangle:$
	\begin{align}
	0=&\langle\nabla\nabla S\vert k \vert m_{3}m_{3}km_{3}\rangle_{3}
	=-\nabla_{3}\nabla_{3}S_{33}+\nabla_{1}\nabla_{3}S_{03}+\nabla_{3}\nabla_{1}S_{03}+\nabla_{3}\nabla_{3}S_{01}\nonumber\\
	&\hspace{4cm}=-3\nabla_{3}S_{13}\nabla_{3}k_{3},\label{DDSRela}\\
	0=&\langle\nabla\nabla S\vert k\vert m_2m_{3}km_{3}\rangle_{3}=-\nabla_{2}\nabla_{3}S_{33}+\nabla_{2}\nabla_{1}S_{03}+\nabla_{2}\nabla_{3}S_{01}\nonumber\\&\hspace{4cm}=-3\nabla_{3}S_{13}\nabla_{2}k_{3},\\
	0=&-\nabla_{0}\nabla_{3}S_{33}+\nabla_{0}\nabla_{3}S_{01}=-3\nabla_{3}S_{13}\nabla_{0}k_{3},
	\end{align}
	\begin{align}
	0=&\langle\nabla\nabla S\vert k \vert m_{3}m_{2}km_{3}\rangle_{3} = -\nabla_{3}\nabla_{2}S_{33}+\nabla_{1}\nabla_{2}S_{03}+\nabla_{3}\nabla_{2}S_{01}\nonumber\\
	&\hspace{4cm}=-3\nabla_{2}S_{13}\nabla_{3}k_{3},\\
	0=&\langle\nabla\nabla S\vert k \vert m_{2}m_{2}km_{3}\rangle_{3} =-\nabla_{2}\nabla_{2}S_{33}+\nabla_2\nabla_2S_{01}\nonumber\\ &\hspace{4cm}=
	-3\nabla_{2}S_{13}\nabla_{2}k_{3},\\
	0=& -\nabla_{0}\nabla_{2}S_{33}+\nabla_{0}\nabla_{2}S_{01}=-\nabla_{2}S_{13}\nabla_{0}k_{3},
	\end{align}
	\begin{align}
	0=&\langle\nabla\nabla S\vert k \vert m_{3}m_{2}km_{2}\rangle_{3} =-\nabla_{3}\nabla_{2}S_{23}+\nabla_{1}\nabla_{2}S_{02}=-\nabla_{2}S_{12}\nabla_{3}k_3,\\
	0=&\langle\nabla\nabla S\vert k \vert m_{2}m_{2}km_{2}\rangle_{3} =-\nabla_{2}\nabla_{2}S_{23}=-\nabla_{2}S_{12}\nabla_{2}k_{3},\\
	0=&\nabla_{0}\nabla_{2}S_{23}=\nabla_{2}S_{12}\nabla_{0}k_{3},
	\end{align}
	\begin{align}
	0=&\langle\nabla\nabla S\vert k \vert m_{3}km_{2}m_2\rangle_{3} =-\nabla_{3}\nabla_{3}S_{22}+\nabla_{1}\nabla_{0}S_{22}=-\nabla_{1}S_{22}\nabla_{3}k_3,\\
	0=&\langle\nabla\nabla S\vert k \vert m_{2}km_{2}m_{2}\rangle_{3} =-\nabla_{2}\nabla_{3}S_{22}=-\nabla_{1}S_{22}\nabla_{2}k_{3},\\
	0=&\nabla_{0}\nabla_{3}S_{22}=\nabla_{1}S_{22}\nabla_{0}k_{3}.\label{DDSRelb}
	\end{align}
	In addition we have the identities
	\begin{equation}\label{S11Rel}
	\nabla_{0}S_{13}=S_{11}\nabla_{0}k_{3},\quad \nabla_{2}S_{13}=S_{11}\nabla_{2}k_{3},\quad \nabla_{3}S_{13}=S_{11}\nabla_{3}k_{3}.
	\end{equation}
	
	Now we are ready to prove  assertion $i)$. Suppose there exists a point $p\in M $ such that $c$ does not have the Kundt property and fix a frame $\{k,l,m_2,m_{3}\}$ as above on a neighborhood $U.$ By continuity we can restrict $U$ if necessary to a neighborhood $\tilde{U}$ such that $c$ is nowhere Kundt on $\tilde{U}.$ From this it follows that $\nabla_{a}k_{3}$ is nowhere zero on $c^{\perp}$ in the neighborhood. Therefore $S_{13}\equiv 0$ by eq. \eqref{S13Rel} and hence \begin{equation}\nabla_{1}S_{22}=\nabla_{2}S_{12}=\nabla_{3}S_{13}=\nabla_{2}S_{13}=\nabla_{0}S_{13}\equiv 0,\end{equation} by eqs.  \eqref{DSRel333}, \eqref{DDSRela}-\eqref{DDSRelb}, and from  \eqref{S11Rel} we obtain $S_{11}\equiv0$.  Consequently,
	\begin{equation}
	\nabla_{X}S_{11}=-2S_{01}\nabla_{X}l_{1}=0,
	\end{equation}
	for any vector field $X.$ Lastly, the Bianchi identity for conformally flat metrics,
	\begin{equation}\label{ConformalFlatBianchi}
	\nabla_{[a}S_{b]c}+\frac{1}{12}\nabla_{[a}Rg_{b]c}=0,
	\end{equation}
	shows that $0=2\nabla_{[1}S_{2]2}=-\frac{1}{12}\nabla_{1}R,$ and therefore using the Bianchi identity again to permute indices demonstrates that all boost-weight negative components of $\nabla S$ vanish. In conclusion $\{S,\nabla S\}$ is uniformly of type $D$ w.r.t. $c$ on $\tilde{U}$ which is a contradiction.  Hence $c$ is  Kundt everywhere on $M$.
	
	Next we prove $ii).$ 
	Given a point $p\in M$, let $\{k,l,m_2,m_3\}$ be a frame as constructed above in a neighborhood $U$ such that the negative boost-weight components of $S$ and $\nabla S$ vanish. Then
	\begin{align}
	0=&\nabla_{1}S_{23}=(-S_{33}+S_{22})\nabla_{1}(m_{2})_{3},\\
	0=&\nabla_{3}S_{12}=(S_{22}-S_{10})\nabla_{3}(m_2)_{1},\\
	0=&\nabla_{1}S_{12}=(S_{22}-S_{01})\nabla_{1}(m_{2})_{1},\\
	0=&\nabla_{2}S_{12}=(S_{22}-S_{01})\nabla_{2}(m_{2})_{1},
	\end{align}
	which together with \eqref{m2Rel} shows that
	\begin{equation}\label{m2id}
	\nabla m_{2}=\theta(kl+lk+m_{3}m_{3})=\theta h,
	\end{equation}
	where $h=g-m_2m_2$ and $\theta$
	is some smooth function  on $U.$ For later use let us show that \begin{equation}\label{Cid}\nabla_{a}\theta\propto (m_2)_a. 
	\end{equation}
	From \eqref{m2id} we have $\nabla_{d}(m_2)^{d}=3\theta$. The Ricci identity shows that
	\begin{equation}\label{CRicciId}
	\begin{gathered}
	R_{2d}=2\nabla_{[c}\nabla_{d]}(m_{2})^{c}=\nabla_{c}\nabla_{d}(m_2)^{c}-\nabla_{d}\nabla_{c}(m_{2})^{c}
	=\nabla_{c}\theta\tensor{h}{_{d}^{c}} -3\nabla_{d}\theta\\
	=(\nabla_{c}\theta)\tensor{h}{_d^{c}}-\theta\nabla_{c}[(m_{2})_{d}(m_{2})^{c}]-3\nabla_{d}\theta=(\nabla_{c}\theta)\tensor{h}{_d^{c}}-3\theta^2(m_{2})_d-3\nabla_{d}\theta,
	\end{gathered}
	\end{equation}
	which verifies \eqref{Cid} since $R_{2d}\propto (m_{2})_{d}.$
	
	Now consider the distribution on $U$ given by $\Delta_{q}=\Span\{k_q,l_{q},(m_{3})_q\}$, for each $q\in U.$ It is clear from \eqref{m2id} that $\Delta$ is integrable and $[m_2,\Delta]\subset \Delta$ implying that the flow of $m_2$ preserves integral surfaces of $\Delta.$ Furthermore the second fundamental form, $A,$ of $\Delta$ is given by
	\begin{equation}
	A(k,k)=0,\quad A(k,l)=-\theta m_2,\quad  A(k,m_3)=0,\end{equation}
	\begin{equation}
	A(l,l)=0,\quad A(l,m_3)=0,\quad A(m_3,m_3)=-\theta m_2.
	\end{equation}
	Let a hat mark the curvature tensors on integral surfaces of $\Delta,$ then using the Gauss-Codazzi equations,
	\begin{equation}
	g(R(X,Y)Z,W)=g(\hat{R}(X,Y)Z,W)+g(A(X,Z),A(Y,W))-g(A(Y,Z),A(X,W)),
	\end{equation}
	for all $X,Y,Z,W\in \Delta,$
	it is clear that the Ricci tensor, $\widehat{Rc}$,  is of type $D$ w.r.t. the frame $\{k,l,m_3\}.$ Furthermore,
	\begin{equation}
	\begin{gathered}
	\hat{R}_{01}=\hat{R}_{1001}+\hat{R}_{3031}=R_{1001}-g(A(l,k),A(k,l))+g(A(k,k),A(l,l))\\+R_{3031}-g(A(m_3,m_3),A(k,l))+g(A(k,m_3),A(m_3,l))={R}_{1001}+{R}_{3031}-2\theta^2,
	\end{gathered}
	\end{equation}
	\begin{equation}
	\begin{gathered}
	\hat{R}_{33}=2\hat{R}_{0313}=2[R_{0313}-g(A(k,l),A(m_3,m_3))+g(A(m_{3},l),A(k,m_3))]\\
	=2R_{0313}-2\theta^2.
	\end{gathered}
	\end{equation}
	The equation
	\begin{equation}\label{endeq}
	0=\langle Rm\vert k \vert klkm_3\rangle_{3}=-R_{3103}+R_{0101},
	\end{equation}
	shows that $\hat{R}_{01}=\hat{R}_{33}.$
	Hence, the integral surfaces of $\Delta$ are Einstein and therefore locally maximally symmetric.
	
	Now fix some integral surface $S$ passing through the point $p$ and let $\phi_t$ be the flow of $m_2.$ Restricting $S$ if necessary we can find an interval $I=(-\epsilon,\epsilon)$ such that $\phi_t$ is defined on $S$ for $t\in I.$ Now define $\Phi:I\times S\rightarrow M$ by $\Phi(t,q)=\phi_{t}(q),$ for all $t\in I$ and $q\in S.$
	Since $[m_2,\Delta]\subset \Delta$ the image $S_t:=\phi_t(S)$ is a new integral surface of $\Delta$ and by \eqref{m2id}, \eqref{Cid} the map $S\overset{\phi_t}{\rightarrow}S_t$ is a homothety. Consequently the pull-back of the metric to $I\times S$ by $\Phi$ gives the desired warped product.
	
\end{proof}

\begin{remark}\label{rem:4Dproof} \begin{em} An alternative, streamlined proof also utilizes 
the Weyl-like {\em Pleb\'{a}nski tensor}
	$$
	P^{ab}{}_{cd}=S^{[a}{}_{[c}S^{b]}{}_{d]}+\delta^{[a}{}_{[c}S_{d]e}S^{b]e}-\tfrac16\delta^{[a}{}_{[c}\delta^{b]}{}_{d]}S^{ef}S_{ef},
	$$
where indices in square brackets are antisymmetrized. 
Suppose that condition a) of the theorem holds for some null congruence $c$, $\bo{S}=0$. Then either $P\neq0$ and $\bo{P}\leq 0$, or $P=0$.
\begin{itemize}
	\item If $P\neq 0$ and $\bo{P}=0$ the Pleb\'{a}nski-Petrov type is either II or D. Proposition \ref{BilWeylOnto} implies $d_1^c=2$, such that condition c) is not met. However, note that if ${\nabla S}\leq 0$ then $\bo{\nabla P}\leq 0$, and theorem \ref{NonConformallyFlatKundtChar} applied to $W=P$ shows that $c$ is Kundt.
	\item If $P\neq 0$ and $\bo{P}=-1$ \{$\bo{P}=-2$\} the Pleb\'{a}nski-Petrov type is III \{N\}; if condition b) holds we have $\bo{\nabla^m}P\leq 0$ for $0\leq m\leq 3$, and we can apply proposition \ref{prop:Weyl-III} \{proposition \ref{prop:Weyl-N}\} to $W=P$ to conclude that $c$ is Kundt.
\end{itemize}
The only remaining case is $P=0$, which happens precisely when a unit spacelike vector field $u$ ($u^au_a=1$) exists such that 
	$$
	S_{ab}=\lambda(u_au_b-\tfrac13 h_{ab}),\quad \lambda\neq 0,\quad h_{ab}=g_{ab}-u_au_b.
	$$
Note that $S^{ab}S_{ab}=\frac23\lambda^2$, such that $u_au_b=\frac14(\frac{3S_{ab}}{\lambda}+g_{ab})$ and
$h_{ab}=\frac34(g_{ab}-\frac{S_{ab}}{\lambda})$ belong to ${\cal C}_{2,1}$,
and that $\bo{S}=0$, 
$\langle uu|k|ku\rangle=-u$ and $d^{c}_1=1$ for any vector field $k$ orthogonal to $u$ and the null congruence $c$ it generates. 
The Bianchi identity \eqref{ConformalFlatBianchi} shows that 
the covariant derivative of $u$ has the structure $\nabla_b u_a=\dot{u}_a u_b+\theta h_{ab}$, where $\dot{u}^a\equiv u^b\nabla_b u^a$ is orthogonal to $u^a$, and $h_a{}^b\nabla_b\lambda=\lambda\dot{u}_a=\frac14 h_a{}^b\nabla_b R$ where $R$ is the Ricci scalar. Note that $\dot{u}^a=h^{bc}\nabla_c(u^au_b)$ belongs to ${\cal C}_{1,2}$
and is thus spacelike, null non-zero, or zero. 
\begin{enumerate}
	\item If $\dot{u}$ is spacelike then $\bo{\nabla S}=0$ holds precisely for 
	the two null directions $c=c_1,\,c_2$ of the timelike 2-plane orthogonal to $u$ and $\dot{u}$. For $k$ along one of these directions we have $\langle \dot{u}|k|k\rangle=-\dot{u}$, such that $d_2^c=2$ and condition c) cannot be met; however, $c$ is Kundt by corollary \ref{TopDim}.
	\item If $\dot{u}$ is null non-zero then $\nabla S$ is of genuine type II w.r.t.~$c$ generated by $\dot{u}$, and $d_2^c=1$. If condition b) holds then $c$ is Kundt by virtue of proposition \ref{prop:k-III}.
	\item If $\dot{u}=0$ then $\{S,\nabla S\}$ is uniformly of type D, \eqref{m2id} with $m_2=u$ holds, and continuing by \eqref{Cid}-\eqref{endeq} we find that the spacetime is locally a warped product with metric \eqref{warp}, where $uu=dx^2$ and $d_3^c=1$ for any $c$ orthogonal to $u$.
\end{enumerate} 
\end{em}
\end{remark}

We now combine previous results in the following theorem where there are no regularity assumptions on the algebraic types, i.e., algebraic types are free to change subject to the given conditions.
\begin{theorem}\label{thm:culmination4d}
	Let $(M,g,c)$ be a four dimensional Lorentzian manifold with a null-line distribution $c$ such that $\nabla^{m}Rm$ is algebraically special w.r.t. $c$, for $0\leq m\leq 3,$ and define $\Gamma $ to be the set of points $p\in M$ satisfying at least one of the following properties: 
	\begin{enumerate}[i)]
		\item the Weyl tensor does not vanish at $p$, $C_p\neq 0;$  
		\item $\{S,\nabla S\}$ is not uniformly of type $D$  w.r.t.\ $c$ at $p$;
		\item $d_{3}^{c}(p)=2.$
	\end{enumerate}
	Then $c$ has the Kundt property on the closure $\bar{\Gamma}$.
	
	Furthermore, for
	each point $p\in M/(\bar{\Gamma}\cup \{S=0\})$ there exists an open neighborhood $U$, an interval $I=(-\epsilon,\epsilon)$, a three-dimensional locally maximally symmetric Lorentzian manifold $(N,h)$ and a strictly positive smooth function $f:I\rightarrow \mathbb{R}$ such that $(U,g)$ is isometric to  $I\times N$ with the warped product metric
	\begin{equation}
	dx^{2} + fh,
	\end{equation}
	where $x$ is a coordinate on $I.$
\end{theorem}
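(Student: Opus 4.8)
The plan is to assemble the earlier results into a case analysis driven by the decomposition of the Riemann tensor into its Weyl part $C$ and its trace-free Ricci part $S$, and to exploit the fact that the hypotheses force everything to be algebraically special along $c$. First I would observe that $\bar{\Gamma}$ is the closure of the set where at least one of the three non-degeneracy conditions holds, so by continuity and lower semi-continuity of $d_3^c$ it suffices to prove that $c$ is Kundt on the open set $\Gamma$ itself; the closure then follows because the Kundt condition $\pi(X^a\nabla_a k^b)=0$ is closed. Thus I would fix a point $p\in\Gamma$ and split according to which of (i)--(iii) holds there, aiming in each case to produce an open neighborhood on which $c$ is Kundt.

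For the bulk of $\Gamma$ I would argue as follows. If $d_3^c(p)=2$ then, since $\dim(c^\perp/c)=n-2=2$ in four dimensions, corollary \ref{TopDim} immediately gives that $c$ is Kundt on a neighborhood of $p$; this disposes of case (iii). If instead $C_p\neq 0$, then by continuity $C$ is a non-vanishing Weyl-like tensor on a neighborhood, and I would invoke theorem \ref{NonConformallyFlatKundtChar} together with proposition \ref{BilWeylOnto}: the algebraic speciality hypotheses $\bo{\nabla^m Rm}\leq 0$ pass to $C$ and its derivatives, and one splits further on the (possibly varying) boost order $s=\bo{C}\in\{0,-1,-2\}$ along $c$. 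For genuine type III or N one applies propositions \ref{prop:Weyl-III} and \ref{prop:Weyl-N} to $W=C$; for the type II/D case $s=0$ one uses corollary \ref{cor:WeylII-D}. As in the dimension-3 proof, the regularity-free handling of a varying boost order is done by the lower-semicontinuity-of-boost-order argument: points where a higher boost order occurs are dense-or-isolated and each invokes the appropriate proposition, so that the Kundt property propagates to $p$.

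The remaining case is (ii) with $C_p=0$, i.e.\ $p$ lies in the conformally flat locus and $\{S,\nabla S\}$ is not uniformly of type D at $p$. Here I would appeal to part i) of theorem \ref{ConformallyFlatTachyonic}. To do so cleanly I must first verify its hypotheses locally: condition a) ($\bo{S}=0$, $S$ non-zero) and condition c) ($d_3^c\equiv 1$). The boost order of $S$ being exactly $0$ rather than more special is where I expect the main obstacle to lie, because the cases $\bo{S}\leq -1$ must be peeled off separately, exactly as in the three-dimensional theorem: if $S$ is genuinely of type III or N at nearby points one uses propositions \ref{prop:Ric-III} and \ref{prop:Ric-N} to force Kundt, and if $S$ vanishes on a neighborhood then $p\notin\bar\Gamma\cup\{S=0\}$ and there is nothing to prove for the Kundt part. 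Once $\bo{S}=0$ is secured, the non-uniform-type-D hypothesis is precisely what theorem \ref{ConformallyFlatTachyonic}(i) needs, and it yields Kundt on a neighborhood.

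Finally, for the warped-product statement I would take $p\in M\setminus(\bar\Gamma\cup\{S=0\})$. Being outside $\bar\Gamma$ means $C\equiv 0$ (conformal flatness), $\{S,\nabla S\}$ is uniformly of type D, and $d_3^c\leq 1$ near $p$; being outside $\{S=0\}$ gives $S\neq0$, so in fact $\bo{S}=0$ and $d_3^c\equiv 1$. These are exactly conditions a)--c) of theorem \ref{ConformallyFlatTachyonic} together with the uniform-type-D hypothesis, so part ii) of that theorem produces the desired local isometry to the warped product $dx^2+fh$ with $(N,h)$ three-dimensional locally maximally symmetric. I would close by remarking that the genericity/regularity-freeness is handled uniformly by the two semicontinuity principles (for $d_N^c$ and for boost order), which is the one delicate point that ties the otherwise purely case-based argument together.
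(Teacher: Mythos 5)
Your toolbox is exactly the paper's: corollary \ref{TopDim} for case (iii), proposition \ref{BilWeylOnto} / corollary \ref{cor:WeylII-D} and propositions \ref{prop:Weyl-III}--\ref{prop:Weyl-N} for the Weyl tensor, propositions \ref{prop:Ric-III}--\ref{prop:Ric-N} and theorem \ref{ConformallyFlatTachyonic} for $S$, glued by the two semicontinuity principles; your warped-product paragraph coincides with the paper's. The structural difference is that you argue directly on $\Gamma$, whereas the paper argues contrapositively: assume $c$ is not Kundt at $p$, pass to a neighborhood $U$ on which $c$ is nowhere Kundt, and derive on all of $U$ at once that $d_3^c\le 1$, that $C\equiv 0$, and that $\{S,\nabla S\}$ is uniformly of type $D$, whence $U\cap\Gamma=\emptyset$ and $p\notin\bar\Gamma$. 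This is not merely cosmetic, and it is where your version has a genuine gap.

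Concretely, in your case (ii) the "peeling" steps require Kundt at the approximating points, but those points need not lie in $\Gamma$. Suppose $p$ satisfies (ii) with $\bo{S}=-1$ at $p$ and is accumulated by points $q_n$ with $\bo{S}=0$ at $q_n$. To conclude Kundt at $p$ by continuity you need Kundt at the $q_n$; but if $\{S,\nabla S\}$ is uniformly of type $D$ near $q_n$, theorem \ref{ConformallyFlatTachyonic} puts you in the warped-product branch, where $c$ is \emph{not} Kundt. Your proposal gives no way to close this case. It can be repaired — e.g.\ by showing that a limit of type-$D$ tensors of the form $\lambda(u_au_b-\tfrac13 h_{ab})$ aligned with $c$ has boost order $0$ or vanishes, contradicting $\bo{S_p}=-1$ — but some such argument must be supplied; the paper's contrapositive framing sidesteps it because "nowhere Kundt on $U$" feeds every sub-case simultaneously. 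Two smaller slips: $\Gamma$ is not open (condition (ii) is not an open condition, e.g.\ where $S$ and $\nabla S$ both vanish), so "Kundt on the open set $\Gamma$" should be a pointwise claim; and "if $S$ vanishes on a neighborhood then $p\notin\bar\Gamma\cup\{S=0\}$" is backwards, since such a $p$ lies in $\{S=0\}$, and whether it lies in $\bar\Gamma$ is precisely what needs to be settled rather than dismissed.
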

\begin{proof}
	Let $p\in M$ be a point such that $c$ does not have the Kundt property at $p.$ By continuity we can find a neighborhood $U$ of $p$ such that $c$ is nowhere Kundt on $U$; by corollary \ref{KundtOnK} it follows that $d_3^c\leq 1$ on $U$, and by proposition \ref{BilWeylOnto} $C$ has boost order $\leq -1$ along $c$ on $U$. If there exists a point $p$ in $U$ such that $\bo{C_p}=-1$, then lower semi-continuity of the boost order shows that this holds on an open neighborhood $V\subset U$ of $p$, which gives a contradiction with proposition \ref{prop:Weyl-III}. The same reasoning shows that there cannot exist points in $U$ for which the boost order of $C$ is $-2$  and thus $C$ vanished identically on $U$. Similarly, the same inductive reasoning using theorem \ref{ConformallyFlatTachyonic} shows that $S$ is of type $D$ or zero at any point in $U$. Furthermore, if at any point $q\in U,$ $\{S_q,\nabla S_{q}\}$ is not uniformly of type $D,$ then this holds in a neighborhood about $q$ and by theorem \ref{ConformallyFlatTachyonic}, $c$ has the Kundt property at $q,$ which is a contradiction.
	
	It follows that $U\cap \Gamma =\emptyset,$ and therefore $p\notin \bar{\Gamma}.$ Hence, if $p\in \bar{\Gamma},$ then $c$ has the Kundt property at $p.$
	
	By definition, $p\in M/(\bar{\Gamma}\cup \{S=0\})$ iff.\ there exists a neighborhood $U$ of $p$ such that $\{S,\nabla S\}$ is  uniformly of type $D$, $C=0$ and $d_{3}^{c}=1$ everywhere on $U.$ By theorem \ref{ConformallyFlatTachyonic}, $U$ has the desired local warped product structure.
\end{proof}

\section*{Acknowledgments} The authors were supported by the Research Council of Norway, Toppforsk grant no.\ 250367:
Pseudo-Riemannian Geometry and Polynomial Curvature Invariants: Classification, Characterisation and Applications.

\medskip

\appendix 

\section{Boost order and null alignment classification of tensors}\label{app:nullalignment}

Let $(V,g)$ be a Lorentzian space, consisting of a real vector space $V$ with Lorentzian inner product $g$. For tensors over $V$ and the dual space $V^*$ we use index-free or abstract index notation, as deemed appropriate in the context; abstract indices are lowered and raised by the metric $g_{ab}$ resp.~the inverse metric $g^{ab}$ ($g_{ab}g^{bc}=\delta_a^c$), leading to geometrically equivalent tensors denoted by the same symbol. 

Consider a fixed null line (one-dimensional subspace) $c$ in $(V,g)$, and an arbitrary null vector $k\in c$. Take a null vector $l$ such that $g(k,l)=1$, let $\pi_{kl}$ denote the timelike 2-plane spanned by $k$ and $l$, and complete the basis $\{k,l\}$ of $\pi_{kl}$ to a (real) null frame $\{e_\alpha\}=\{e_0,e_1,e_i\}=\{k,l,m_i\}$ of $(V,g)$, where the spacelike vectors $m_i$ form an orthonormal basis of $\pi_{kl}^\perp$. Here and throughout the paper, (indexed) Greek frame labels run from 0 to $n-1$, while spatial frame labels $i,\,j,\,\ldots$ run from $2$ to $n-1$. 
Consider the endomorphism ${\cal X}$ of $V$ defined by $k\mapsto k,\,l\mapsto -l,\, m_i\mapsto 0$, which is a generator of the pure boosts in $\pi_{kl}$.
Let ${\cal T}_r$ be the vector space of covariant tensors of rank $r$ over the dual space $V^*$. The tensor product of $r$ copies of the adjoint map of ${\cal X}$
is a diagonalizable endomorphism of ${\cal T}_r$ with spectrum $\{b|-r\leq b\leq r\}$; the corresponding eigenspaces ${\cal T}_{r,b}$ depend on $\pi_{kl}$ and have a monomial basis $\{e_{\alpha_1}\cdots e_{\alpha_r}|\bw{\alpha_1\cdots\alpha_r}=-b\}$, where $\bw{\alpha_1\cdots\alpha_r}=\sum_{i=1}^r(\delta_{\alpha_i 0}-\delta_{\alpha_i 1})$ is the {\em boost weight} of the multi-index $\alpha_1\cdots\alpha_r$. By spectral decomposition any $T\in{\cal T}_r$ can be written uniquely as the sum 
\begin{equation}
T=\sum_{b=-r}^r (T)_b,\quad (T)_b\in{\cal T}_{r,b}.
\end{equation}      
Define the integer $s$ by $(T)_s\neq 0$ and $(T)_{b}=0$ for $b>s$. Since any other null vector $\hat{l}$ with $g(k,\hat{l})=1$ can be written as $\hat{l}=l-z^i m_i-\frac12 z^iz_i k$ it is easy to see that $s$ does not depend on the chosen $l$, nor does it depend on the choice of $k\in c$. Therefore this integer is an intrinsic property of $c$ and $T$, denoted $\bo{T}$ and called the {\em boost order} of $T$ along $c$. 

Let ${\cal N}$ denote the set of all null lines in $(V,g)$, and define 
\begin{equation}
\bomax{T}=\textrm{max}\{\bo{T}|c\in{\cal N}\}\,.
\,,\qquad \textrm{bo}_{\textrm{min}}(T)=\textrm{min}\{\bo{T}|c\in{\cal N}\}\,.
\end{equation}
We note that $\bomax{T}$ equals the number of totally antisymmetric index slots in $T_{a_1\cdots a_r}$. 
On the other hand, $\bomin{T}$ 
can take any value between $-\bomax{T}$ and $\bomax{T}$,
depending on the specific tensor $T$. 
If $\bomin{T}\leq 0$ then $T$ is called of (null alignment) {\em type II or more special}. Consider the subcase where $\bomin{T}=0$; if there is a unique $c\in{\cal N}$ such that 
$\bo{T}=\bomin{T}=0$ then $T$ is of {\em genuine type II}, else it is of {\em type D}. $T$ is of {\em genuine type III} if $-\bomax{T}<\bomin{T}<0$ and of {\em genuine type N} if $\bomin{T}=-\bomax{T}$; in these cases the null line $c$ for which $\bo{T}=\bomin{T}$ is automatically unique.

Consider a collection of tensors $\{T[A]|A\in{\cal A}\}$. If there exists $c\in{\cal N}$ such that $\bo{T[A]}\leq 0$ for all $A\in {\cal A}$ then the tensors are of {\em aligned type II or more special} (w.r.t.~$c$). If there exist different null lines $c_1,c_2\in{\cal N}$ such that $\textrm{bo}_{c_1}(T[A])=\textrm{bo}_{c_2}(T[A])=0$ for all $A\in {\cal A}$ then the tensors are said to be of {\em aligned type D} (w.r.t.~$c_1$ and $c_2$).


\begin{thebibliography}{99}
	
\bibitem{Milsonetal05} R.~Milson, A.~Coley, V.~Pravda, and A.~Pravdov\'a, {\it Alignment
and algebraically special tensors in {L}orentzian geometry},  {\em Int. J. Geom.
		Meth. Mod. Phys.} {\bf 2} (2005) 41--61.
	
\bibitem{Hervik-align}
S.~Hervik, {\it A spacetime not characterized by its invariants is
	of aligned
	type {II}},  {\em Class. Quantum Grav.} {\bf 28} (2011) 215009.

\bibitem{OrtPraPrareview} M.~Ortaggio, V.~Pravda, and A.~Pravdov\'a, {\it Algebraic classification of higher dimensional spacetimes based on null alignment},  {\em Class. Quantum
	Grav.} {\bf 30} (2012) 103001.

\bibitem{ColeyetalKundt} A.~Coley, S.~Hervik, G.~Papadopoulos, and N.~Pelavas, {\it Kundt spacetimes}, {\em Class. Quantum Grav.} {\bf 26} (2009)
105016.

\bibitem{SCPI} A.~Coley, S.~Hervik, and N.~Pelavas, {\it Lorentzian manifolds and scalar curvature invariants}, {\em Class. Quantum Grav.} {\bf 27} (2010)
102001.

\bibitem{Olverbook} P.J.~Olver, {\it Equivalence, invariants, and symmetry}, Cambridge University Press, Cambridge (1995)

\bibitem{Stephanibook} H.~Stephani, D.~Kramer, M.~MacCallum, C.~Hoenselaers, and E.~Herlt,
{\em Exact
	Solutions of {E}instein's Field Equations},
 Cambridge University Press, Cambridge, second~ed. (2003).
 
\bibitem{Brinkmann25}
H.~W. Brinkmann, {\it Einstein spaces which are mapped conformally
	on each
	other},  {\em Math. Ann.} {\bf 94} (1925) 119--145.

\bibitem{Ortaggio18} M.~Ortaggio {\it Lovelock vacua with a recurrent null vector field}, {\em Phys. Rev. D} {\bf 97} (2018)
044051.

\bibitem{Pravdaetal04}
V.~Pravda, A.~Pravdov\'a, A.~Coley, and R.~Milson, {\it All spacetimes with vanishing curvature invariants},  {\em Class. Quantum Grav.} {\bf 19} (2002) 6213--6237.

\bibitem{Coleyetal04}
A.~Coley, R.~Milson, V.~Pravda, and A.~Pravdov\'a,  {\it Vanishing scalar invariant spacetimes in higher dimensions},  {\em Class. Quantum Grav.} {\bf 21} (2004) 5519--5542.

\bibitem{CharBySpi}
A.~Coley, S.~Hervik, and N.~Pelavas, {\it Spacetimes characterized
	by their  scalar curvature invariants},  {\em Class. Quantum Grav.} {\bf 26} (2009)
025013.

\bibitem{HerOrtWyl13} S.~Hervik, M.~Ortaggio, and L.~Wylleman, {\it Minimal tensors and purely electric or magnetic spacetimes of arbitrary dimension},  {\em Class. Quantum Grav.} {\bf 30} (2013)
165014.


\end{thebibliography}

\end{document}